\documentclass[11pt]{article}
\usepackage{amsmath,amsthm,amsfonts,amssymb,bm,mathrsfs,wasysym}
\usepackage{epsfig}
\usepackage[usenames]{color}
\usepackage{verbatim}
\usepackage{hyperref}
\usepackage{multicol}
\usepackage{comment}
\usepackage{float}
\usepackage{graphicx}
\usepackage[utf8]{inputenc}

\topmargin 0in
\oddsidemargin .01in
\textwidth 6.5in
\textheight 9in
\evensidemargin 1in
\addtolength{\voffset}{-.6in}
\addtolength{\textheight}{0.22in}
\parskip \medskipamount
\parindent	0pt


\newtheorem{theorem}{Theorem}[section]

\numberwithin{equation}{section}
\newtheorem{lemma}[theorem]{Lemma}
\newtheorem{proposition}[theorem]{Proposition}

\numberwithin{equation}{section}

\def\N{\mathbb{N}}
\def\Z{\mathbb{Z}}

\def\C{\mathcal{C}}

\def\S{\mathcal{S}}

\def\A{\mathcal{A}}
\def\cR{\mathcal{R}}

\def\B{\mathcal{B}}

\def\bE{\mathbb{E}}

\def\bP{\mathbb{P}}

\def\bP{\mathbb{P}}

\allowdisplaybreaks

\newcommand{\1}{{\text{\Large $\mathfrak 1$}}}

\def\bs{\backslash}

\def\capa{\text{cap}}

\renewcommand{\emptyset}{\varnothing}
\renewcommand{\phi}{\varphi}
\renewcommand{\epsilon}{\varepsilon}
\def\tilde{\widetilde}

\def\reff#1{(\ref{#1})}

\begin{document}
\title{\bf Large Deviations for  Intersections of Random Walks}

\author{Amine Asselah \thanks{
LAMA, Univ Paris Est Creteil, Univ Gustave Eiffel, UPEM, CNRS, F-94010, Cr\'eteil, France; amine.asselah@u-pec.fr} \and
Bruno Schapira\thanks{Aix-Marseille Universit\'e, CNRS, Centrale Marseille, I2M, UMR 7373, 13453 Marseille, France;  bruno.schapira@univ-amu.fr} 
}
\date{}
\maketitle
\begin{abstract}
We prove a Large Deviations Principle for the number of 
intersections of two independent infinite-time ranges in 
dimension five and more, improving upon the moment bounds of Khanin, Mazel, Shlosman and Sina\"i \cite{KMSS94}. 
This settles, in the discrete setting, a conjecture of van den Berg, Bolthausen and den Hollander 
\cite{BBH04}, who analyzed this question for the Wiener sausage in finite-time horizon. The proof builds on their result (which was resumed in the discrete setting by Phetpradap \cite{Phet12}), and combines it with a series of tools that were developed in recent works of the authors \cite{AS17,AS19a,AS20}. Moreover, we show that most of the intersection 
occurs in a single box where both walks realize an occupation density of order one.  \\

\noindent \underline{Keywords and phrases:} Random Walk; Range; Intersections; Large Deviations Principle. \\
MSC 2010 \underline{subject classifications:} Primary 60F10, 60G50.
\end{abstract}

\section{Introduction}\label{sec-intro}
\subsection{Overview and results}
In 1921, P\'olya \cite{P1921} presents his recurrence theorem,
inspired by some counter-intuitive observation on the large number of intersections
two random walkers in a park would make. A hundred
years later, the study of intersections of random walks is still active, and 
produces perplexing problems. This paper is devoted to estimating deviations for the
number of sites two infinite trajectories both visit, 
when dimension is five or larger.

It is known since the work of Erd\"os and Taylor \cite{ET60}, that the number of intersections of two independent random walk ranges on $\Z^d$ is almost surely 
infinite if $d\le 4$, and finite if $d\ge 5$. In 1994,   
Khanin, Mazel, Shlosman and Sina\"i \cite{KMSS94} obtain the following bounds in dimension $d\ge 5$: for any $\varepsilon>0$, and all $t$ large enough, 
\begin{equation}\label{KMSS}
\exp(- t^{1-\frac 2d +\epsilon})  \le \bP(|\cR_\infty \cap \tilde \cR_\infty|>t ) \le \exp(t^{1-\frac 2d-\epsilon}),
\end{equation}
where $\cR_\infty$ and $\tilde \cR_\infty$ denote two independent ranges.  
About ten years later, van den Berg, Bolthausen and den Hollander \cite{BBH04} prove a Large Deviations Principle for the Wiener sausage (the continuous counterpart of the range), 
in {\it a finite-time horizon}. Their result was resumed in the discrete setting by Phetpradap \cite{Phet12} and reads as follows: for any $b>0$, there exists a positive constant $\mathcal I(b)$, such that 
\begin{equation}\label{BBH.Phet}
\lim_{t\to \infty}   \frac 1{t^{1-\frac 2d}} \log  \bP(|\cR_{bt}\cap \tilde \cR_{bt}|>t) = -\mathcal I(b), 
\end{equation}
where $\cR_{bt}$ and $\tilde \cR_{bt}$ denote the ranges of two independent walks up to time $\lfloor bt\rfloor $. Furthermore, 
through an analysis of the variational formula of the rate function, 
the authors of \cite{BBH04} show that $\mathcal I(b)$ 
reaches a plateau and conjecture that 
the rate function for the infinite-time problem coincides
with the value of $\mathcal I$ at the plateau.  
Our first result confirms this conjecture. The ranges of two independent simple random walks is denoted $\{\cR_n, n\in \N\cup\{\infty\}\}$ and $\{\widetilde \cR_n,  
n\in \N\cup \{\infty\}\}$.

\begin{theorem} \label{theo.main}
Assume $d\ge 5$. 
The following limit exists and is positive:  
\begin{equation}\label{main.limit}
\mathcal I_\infty:=\lim_{t\to \infty} -\frac{1}{t^{1-\frac 2d}} \log \bP(|\cR_\infty \cap \tilde \cR_\infty|>t). 
\end{equation}
Moreover, there exists $b_*>0$, such that for all $b>b_*$, 
\begin{equation}\label{limit.finitetime}
\mathcal I_\infty = \mathcal I(b)  = \lim_{t\to \infty} -\frac{1}{t^{1-\frac 2d}} \log \bP(|\cR_{bt} \cap \tilde \cR_{bt}|>t).
\end{equation}  
\end{theorem}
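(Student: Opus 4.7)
The argument splits into an easy upper bound on the rate and a considerably harder matching lower bound. For the easy direction, the monotone inclusion $\cR_{bt}\cap \widetilde\cR_{bt}\subseteq \cR_\infty \cap \widetilde\cR_\infty$ gives $\bP(|\cR_\infty\cap\widetilde\cR_\infty|>t)\ge \bP(|\cR_{bt}\cap\widetilde\cR_{bt}|>t)$ for every $b>0$, so \reff{BBH.Phet} yields $\limsup_{t\to\infty}-t^{-(1-2/d)}\log \bP(|\cR_\infty\cap\widetilde\cR_\infty|>t)\le \mathcal I(b)$. The variational analysis of \cite{BBH04} shows that $b\mapsto \mathcal I(b)$ is nonincreasing and plateaus at a positive value $\mathcal I_*:=\mathcal I(b_*)$ for $b\ge b_*$, which already gives the easy half of both \reff{main.limit} and \reff{limit.finitetime}.

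\textbf{Lower bound, main decomposition.} Fix $b>b_*$ and a small $\epsilon>0$. Starting from the deterministic bound
\begin{equation*}
|\cR_\infty\cap\widetilde\cR_\infty| \;\le\; |\cR_{bt}\cap\widetilde\cR_{bt}|+|(\cR_\infty\setminus\cR_{bt})\cap\widetilde\cR_\infty|+|\cR_{bt}\cap(\widetilde\cR_\infty\setminus\widetilde\cR_{bt})|,
\end{equation*}
the event $\{|\cR_\infty\cap\widetilde\cR_\infty|>t\}$ is contained in the union
\begin{equation*}
\{|\cR_{bt}\cap\widetilde\cR_{bt}|>(1-2\epsilon)t\}\;\cup\;\{|(\cR_\infty\setminus\cR_{bt})\cap\widetilde\cR_\infty|>\epsilon t\}\;\cup\;\{|\cR_{bt}\cap(\widetilde\cR_\infty\setminus\widetilde\cR_{bt})|>\epsilon t\}.
\end{equation*}
The first event is controlled directly by \reff{BBH.Phet} applied at horizon $bt$ and threshold $(1-2\epsilon)t$: after rescaling, its log-probability equals $-(1-2\epsilon)^{1-2/d}\,\mathcal I\bigl(b/(1-2\epsilon)\bigr)\, t^{1-2/d}(1+o(1))$, which converges to $-\mathcal I_*\, t^{1-2/d}$ as $\epsilon\to 0$, since $b/(1-2\epsilon)>b_*$ too.

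\textbf{Cross terms and main obstacle.} The heart of the proof is to show that each of the two remaining ``cross'' events has probability at most $\exp(-c(b)t^{1-2/d})$ with $c(b)\to +\infty$ as $b\to+\infty$. This is where the machinery of \cite{AS17,AS19a,AS20} enters. The picture, which I would make rigorous jointly with the LDP and which is precisely the content of the localization statement announced in the theorem, is that a large intersection forces both ranges to concentrate inside a common box $Q$ of side $O(t^{1/d})$ during their first $O(t)$ steps. Once $Q$ is located, each walk past time $bt$ is typically at Euclidean distance $\asymp \sqrt{bt}\gg t^{1/d}$ from $Q$ in dimension $d\ge 5$ and $b$ large, so Green's function and capacity estimates give that the expected number of late-time visits to $Q$ is of order $b^{-(d-2)/2}\,t^{1-2/d}$; an exponential moment bound in the spirit of \cite{AS17,AS19a,AS20} then converts this into the required super-exponential tail. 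The main obstacle is the joint handling of the random location and geometry of $Q$, the cost of confining both walks there, and the probability that a walk tail accumulates $\epsilon t$ extra intersections with the distant range of the other walker; this is exactly where the interplay between the sharp finite-time LDP of \cite{BBH04,Phet12} and the capacity/fluctuation estimates of \cite{AS17,AS19a,AS20} is decisive. Combining the three pieces of the decomposition and letting first $\epsilon\to 0$ and then $b\to\infty$ produces $\liminf \ge \mathcal I_*$, matching the easy upper bound and proving simultaneously \reff{main.limit} with $\mathcal I_\infty=\mathcal I_*>0$, as well as \reff{limit.finitetime} for every $b>b_*$.
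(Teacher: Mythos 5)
Your ``easy'' step is fine and matches the paper's \reff{lower.bound.optimale}: the inclusion $\cR_{bt}\cap\widetilde\cR_{bt}\subseteq\cR_\infty\cap\widetilde\cR_\infty$ together with \reff{BBH.Phet} immediately gives the inequality $\liminf_{t\to\infty}\frac{1}{t^{1-2/d}}\log\bP(|\cR_\infty\cap\widetilde\cR_\infty|>t)\ge -\mathcal I(b)$ for every $b$, hence $\ge-\mathcal I_\infty$.

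\textbf{Where the argument breaks.} The hard-direction decomposition does not work. You write $\{|\cR_\infty\cap\widetilde\cR_\infty|>t\}\subseteq\{|\cR_{bt}\cap\widetilde\cR_{bt}|>(1-2\epsilon)t\}\cup\{|(\cR_\infty\setminus\cR_{bt})\cap\widetilde\cR_\infty|>\epsilon t\}\cup\{|\cR_{bt}\cap(\widetilde\cR_\infty\setminus\widetilde\cR_{bt})|>\epsilon t\}$ and claim each cross event has probability at most $\exp(-c(b)t^{1-2/d})$ with $c(b)\to\infty$. This claim is false, and no choice of the order of limits saves it. The cross event $\{|(\cR_\infty\setminus\cR_{bt})\cap\widetilde\cR_\infty|>\epsilon t\}$ is \emph{not} a ``walk tail visiting a pre-localized distant box'' event; nothing in its definition forces $\widetilde\cR_\infty$ to stay near the origin. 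Both walks are free to realize a fresh Swiss-cheese of volume $\epsilon t$ near the (typical, unconstrained) location of $S_{bt}$, which costs only about $\exp\bigl(-\mathcal I_\infty(\epsilon t)^{1-2/d}\bigr)=\exp\bigl(-\mathcal I_\infty\,\epsilon^{1-2/d}\,t^{1-2/d}\bigr)$, a quantity that is \emph{independent of} $b$ and that becomes arbitrarily \emph{cheap} as $\epsilon\to 0$. Since your final step sends $\epsilon\to 0$ first, the cross terms dominate and the union bound yields a $\liminf$ bounded above by $\mathcal I_\infty\epsilon^{1-2/d}\to 0$ rather than by $\mathcal I_*$. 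In short, cutting the \emph{time} parameter at $bt$ cannot decouple the problem, because a large intersection after time $bt$ is exactly as easy as one before.

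\textbf{What the paper actually does.} The paper's cut-off in time is at $n=\exp(\beta t^{1-2/d})$, not at $bt$: by \reff{finite.time}, $\bP(\widetilde\cR_\infty\cap\cR[n,\infty)\neq\emptyset)\le Cn^{(4-d)/2}$, so after such an exponentially long time the intersection is \emph{empty} with overwhelming probability, which truly removes the tail without any Swiss-cheese competition. The reduction from that horizon to a finite horizon of order $t$ is then done in \emph{space}, not time: Propositions \ref{prop.main} and \ref{prop.finiteballs} localize almost all of the intersection to finitely many boxes of side $O(t^{1/d})$, and Proposition \ref{prop.concav} — the surgery/concavity argument decomposing trajectories into excursions between the boxes, redistributing entry points via harmonic measure, and using Lemma \ref{lem.concav} — shows that using more than one box is strictly more costly. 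This is what lets the paper invoke \reff{BBH.Phet} at finite horizon $bt$ for a \emph{single} box, rather than trying to suppress a cross term that is genuinely of the same order. You would need to replace your time decomposition by this spatial localization machinery (or an equivalent) to close the gap.
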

For $\mathcal I_\infty$ and $b_*$, \cite{BBH04} presents variational
formulas whose thorough study leads
to a rich and precise phenomenology. Namely, that the two walks adopt the same strategy, the so-called Swiss cheese during a time $b_*t$, in a ball-like region whose 
volume should be of order $t$, leaving holes everywhere of size order $1$. After time $b_*t$, the two walks would roam as typical random walks.

Our second result shows that a fraction arbitrarily close to one of the desired number of intersections occurs in a box with volume of order $t$. 
To state the result, define  $Q(x,r) := [x-r/2,x+r/2)^d$, 
for $x\in \Z^d$, and $r>0$. 

\begin{theorem}\label{theo.scenario}
For any $\epsilon>0$, there exists a constant $L=L(\varepsilon)>0$, such that 
\begin{equation}\label{scenario}
\lim_{t\to \infty} \bP(\exists x\in \Z^d\, :\,  |\cR_\infty \cap \widetilde \cR_\infty\cap Q(x,Lt^{1/d})| >(1-\epsilon)t\mid |\cR_\infty \cap \tilde \cR_\infty|>t )= 1. 
\end{equation}
\end{theorem}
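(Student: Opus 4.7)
The plan is to combine the finite-horizon LDP of Theorem \ref{theo.main} with a strict sub-additivity of the rate function, inherited from the scaling
\[
\log \bP(|\cR_\infty\cap\tilde\cR_\infty|>\lambda t) = -\mathcal{I}_\infty\, \lambda^{1-2/d}\, t^{1-2/d}\,(1+o(1)), \qquad \lambda>0 \text{ fixed},
\]
which follows immediately by plugging $\lambda t$ for $t$ in \eqref{main.limit}. The first step is a reduction to finite time: by Theorem \ref{theo.main} we may fix $b>b_*$ and replace $\cR_\infty$, $\tilde\cR_\infty$ by $\cR_{bt}$, $\tilde\cR_{bt}$; the intersections created after time $bt$ form a tight random variable (in $d\geq 5$ the expected cardinality stays bounded), so the sought concentration for the truncated ranges implies it for the infinite ones.

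Next I would tessellate $\Z^d$ into boxes $Q_j = Q(x_j, L t^{1/d})$ and let $N_j = |\cR_{bt}\cap\tilde\cR_{bt}\cap Q_j|$. On the complement of the event in \eqref{scenario} we have $\sum_j N_j > t$ while $\max_j N_j \leq (1-\epsilon)t$. A combinatorial / random-shift pigeonhole argument then produces (for $L$ large enough depending on $\epsilon$) two disjoint regions $A_1, A_2\subset\Z^d$, separated by distance at least $L t^{1/d}$, and weights $\lambda_1, \lambda_2 \in [\delta(\epsilon), 1]$ with $\lambda_1+\lambda_2\geq 1$, such that $|\cR_{bt}\cap\tilde\cR_{bt}\cap A_i|\geq \lambda_i t$ for $i=1,2$, where $\delta(\epsilon)>0$ depends only on $\epsilon$.

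The crucial estimate is an upper bound on this latter event. Decoupling the contributions of $A_1$ and $A_2$ via the strong Markov property applied at the hitting and exit times of well-chosen enlargements of the $A_i$, and applying the upper bound underlying \eqref{BBH.Phet} inside each region separately, one aims at
\begin{equation*}
\bP\bigl(|\cR_{bt}\cap\tilde\cR_{bt}\cap A_i|\geq \lambda_i t,\ i=1,2\bigr) \leq \exp\!\bigl(-\mathcal{I}_\infty(\lambda_1^{1-2/d}+\lambda_2^{1-2/d})\, t^{1-2/d}\,(1+o(1))\bigr).
\end{equation*}
Since $x\mapsto x^{1-2/d}$ is strictly concave with value $0$ at the origin, it is strictly sub-additive; uniformly over $\lambda_i\in[\delta,1]$ with $\lambda_1+\lambda_2\geq 1$ one has $\lambda_1^{1-2/d}+\lambda_2^{1-2/d}\geq 1+c(\delta)$ for some positive $c(\delta)$. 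The gain $c(\delta)\mathcal{I}_\infty t^{1-2/d}$ in the exponent absorbs the union bound over the $O\bigl((\sqrt{bt}/(Lt^{1/d}))^{2d}\bigr)$ admissible pairs $(A_1,A_2)$ and a polynomial discretization of $(\lambda_1,\lambda_2)$, delivering the required bound $o\bigl(\exp(-\mathcal{I}_\infty t^{1-2/d})\bigr)$, which combined with Theorem \ref{theo.main} yields \eqref{scenario}.

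The main obstacle is the decoupling step in the multi-region geometry. Each walk may enter and leave $A_1$ and $A_2$ many times in an unpredictable order, so the contributions to the intersection count in the two regions are correlated through the common trajectories. Making the heuristic picture of \emph{independent Swiss cheeses in $A_1$ and $A_2$} rigorous requires capacity-type controls on entrance distributions, uniform Green-function estimates for the mass collected inside a bounded region as a function of the entrance law, and the sub-additivity and concentration machinery developed in \cite{AS17,AS19a,AS20} and already deployed in the proof of Theorem \ref{theo.main}.
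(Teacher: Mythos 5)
Your high-level strategy --- tessellate space, use the strict concavity of $t\mapsto t^{1-2/d}$ to show that splitting the intersection mass over two well-separated regions is exponentially more costly, and decouple the two regions via the strong Markov property at hitting/exit times of enlargements --- does match the spirit of the paper's Proposition~\ref{prop.concav} and Lemma~\ref{lem.concav}. You also correctly flag that the decoupling surgery (replacing the entrance points by independent samples from harmonic measure, controlling the number of crossings, etc.) is nontrivial; this is exactly what the proof of Proposition~\ref{prop.concav} does. However, there is a more fundamental gap that you do not flag, and it lies upstream of the decoupling.

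The ``combinatorial / random-shift pigeonhole'' step, which is supposed to produce two \emph{well-separated} regions $A_1,A_2$ each carrying a fraction at least $\delta(\epsilon)$ of the intersection mass, has no a priori localization to lean on. On the complement of the target event, the intersection mass could be spread over a number of $Lt^{1/d}$-boxes growing with $t$, and no deterministic pigeonhole bounds this number. Consequently: (a) to capture mass $\ge\delta(\epsilon)t$ in each of $A_1,A_2$ you would generically be forced to let them be unions of many boxes, which wrecks both the spatial separation needed for the decoupling and the geometric control of the entrance laws; and (b) your union bound over pairs $(A_1,A_2)$, which you count as only $O((\sqrt{bt}/(Lt^{1/d}))^{2d})$ pairs, only holds if each $A_i$ is a single box --- if they are unions of $M$ boxes the count is roughly $(\text{poly}(t))^M$, and without an a priori bound on $M$ this swamps any sub-additivity gain. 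The paper's resolution is Proposition~\ref{prop.finiteballs}: conditionally on $|\cR_\infty\cap\tilde\cR_\infty|>t$, a $(1-\epsilon)$-fraction of the intersection sits in at most $N(\epsilon)$ cubes of side $t^{1/d}$. This localization is itself a substantial result, proved via the multi-scale density decomposition of Proposition~\ref{prop.main}, the folding estimate of Theorem~\ref{theo-folding}, and the covering bound of Theorem~\ref{theo.finitecovering} from \cite{AS20}. Without it, the paper cannot even run the step-by-step reduction $\B_{N,t}\subseteq\{\B_{N,L,t}\}\subseteq\bigcup_k\mathcal A(k,L,\cdot,\cdot)$ that replaces your pigeonhole. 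So your sketch is missing the key input that makes the combinatorics finite.

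Two secondary points. First, your reduction to finite time via Theorem~\ref{theo.main} is logically awkward (the paper derives Theorem~\ref{theo.main} \emph{from} Theorem~\ref{theo.scenario} and Proposition~\ref{prop.concav}); and the claim that ``the intersections created after time $bt$ form a tight random variable'' is not sufficient, since under the conditioning on the rare event the law is tilted. The paper instead truncates at a much longer time $n\sim\exp(\beta t^{1-2/d})$ using the elementary first-moment estimate \eqref{finite.time} and compares the resulting polynomial decay with the stretched-exponential lower bound --- this avoids circularity and is uniform under the conditioning. Second, working with only $k=2$ regions is not enough once you account for the fact that the mass may be spread over many boxes; the paper must treat all $k$ up to $N(\epsilon)$, and Lemma~\ref{lem.concav} is formulated for general $k$ precisely for this reason.
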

Our proof provides some bound on $L(\epsilon)$, which is (stretched) exponential in $1/\epsilon$. We note that it is expected that $L$ should indeed depend on 
$\epsilon$, since the Swiss cheese is delocalized, see \cite{BBH04}. Concerning the (random) site $X(t,\epsilon)$ realizing the centering of the box appearing in the statement of Theorem \ref{theo.scenario}, 
not much is known. Our proof yields tightness of $X(t,\epsilon)/t^{1/d}$.

Sznitman in \cite{S17} formalized precisely the picture of Swiss cheese using a tilted version of the Random Interlacements, but so far no rigorous link has been established with the large deviations for the volume of the range nor for the intersection of two ranges.

Our techniques are robust enough to consider other natural functionals of two ranges, which do not seem to be tractable by moment methods, as in \cite{KMSS94}. 
In particular in \cite{AS19b} we consider the functional $\chi_\C(\cdot,\cdot)$ defined for finite subsets $A,B\subseteq \Z^d$, by   
$$\chi_\C(A,B) =  \capa(A) + \capa(B)- \capa(A\cup B),$$
where $\capa(A):=\sum_{x\in A} \bP_x(\cR[1,\infty)\cap A= \emptyset)$, denotes the capacity of $A$.  
It turns out that this definition may be extended to infinite subsets. Indeed, one has for any finite $A,B\subseteq \Z^d$,  
$$\chi_\C(A,B) \le \chi(A,B):= 2\sum_{x\in A} \sum_{y\in B} \bP_x(\cR[1,\infty)\cap A = \emptyset)\cdot G(y-x)\cdot \bP_y(\cR[1,\infty)\cap B = \emptyset), $$
and it makes sense to consider $\chi(\cR_\infty,\tilde \cR_\infty)$. In \cite{AS19b}, we show using similar arguments as here that in dimension $d\ge 7$, for some positive constants $c_1,c_2$, and all $t$ large enough,  
$$\exp(-c_1 t^{1-\frac 2{d-2}}) \le \bP(\chi(\cR_\infty,\tilde \cR_\infty) > t) \le \exp(-c_2 t^{1-\frac 2{d-2}}).$$
These bounds are used in turn to derive a moderate deviations principle for the capacity of the range in the Gaussian regime.

Interestingly, a related object, the mutual intersection local time defined by 
$$J_\infty := \sum_{i=0}^\infty \sum_{j=0}^\infty \1\{S_i = \tilde S_j\},$$
has a stretched exponential tail with a different exponent. Indeed, Khanin et al. in \cite{KMSS94}, also show that for some positive constants $c$ and $c'$, for all $t$ large enough,  
$$\exp(-c \sqrt t ) \le \bP(J_\infty >t) \le \exp(-c'\sqrt t).$$
Chen and M\"orters \cite{CM09} then prove that the limit of $ t^{-1/2}\cdot \log \bP(J_\infty >t)$  
exists and has a nice variational representation. Our proofs allow to consider some intermediate quantity, the time spent by one walk on the range of the other walk, and show that its tail distribution 
has the same speed of decay as the intersection of  two ranges. 
More precisely, consider two independent walks $S$ and $\tilde S$, and denote by $\tilde \ell_\infty$ the local times associated to  $\tilde S$ (see below for a definition). 
\begin{proposition}\label{prop.localtime}
There exists two positive constants $c_1$ and $c_2$, such that for any $t>0$, 
\begin{equation}\label{bornes.localtime}
\exp(-c_1t^{1-\frac 2d}) \le \bP(\tilde \ell_\infty (\cR_\infty) > t) \le \exp(-c_2t^{1-\frac 2d}). 
\end{equation}
Furthermore, for any $\epsilon>0$, there exists an integer $N=N(\epsilon)$, such that 
\begin{equation}\label{finite.balls.localtime}
\lim_{t\to \infty} \bP\left(\exists x_1,\dots,x_N\in \Z^d\, :\, \tilde \ell_\infty\left(\cR_\infty \cap \Big(\bigcup_{i=1}^N Q(x_i,t^{1/d})\Big)\right)> (1-\epsilon)t\mid \tilde \ell_\infty (\cR_\infty) > t\right)= 1. 
\end{equation} 
\end{proposition}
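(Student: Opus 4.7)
The plan is to bootstrap Theorem~\ref{theo.main} via a comparison between the local time $L:=\tilde\ell_\infty(\cR_\infty)$ and the intersection count $D:=|\cR_\infty\cap\tilde\cR_\infty|$. The lower bound of \eqref{bornes.localtime} is immediate: since every point of $\cR_\infty\cap\tilde\cR_\infty$ contributes at least one unit of local time, one has $D\le L$ pointwise, and so the lower bound of Theorem~\ref{theo.main} applied to $D$ transfers to $L$.

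The core of the argument is the upper bound of \eqref{bornes.localtime}. Decomposing
$$L=\sum_{x\in\cR_\infty\cap\tilde\cR_\infty}\tilde\ell_\infty(x),$$
the key step is to show that there exist $K=K(d)>G(0)$ and $c=c(d)>0$ such that
\begin{equation}\label{plan:key}
\bP\bigl(L\ge K\,m,\ D\le m\bigr)\le e^{-c\,m}\qquad\text{for every }m\ge1.
\end{equation}
Granted \eqref{plan:key} applied with $m=\lfloor t/K\rfloor$, one has
$$\bP(L>t)\le\bP(D>t/K)+\bP(L>t,\,D\le t/K)\le\exp(-c_1\,t^{1-2/d})+e^{-c\,t/K},$$
and since $t^{1-2/d}\ll t$, the first term (bounded via Theorem~\ref{theo.main}) dominates. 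To prove \eqref{plan:key}, we order the intersection points $x_1,x_2,\ldots$ in the chronological order of their first visit by $\tilde S$, denote by $\sigma_i$ the corresponding hitting times, and decompose the trajectory of $\tilde S$ into successive excursions in the intervals $[\sigma_i,\sigma_{i+1})$. By the strong Markov property at $\sigma_i$, the number of returns of $\tilde S$ to $\{x_1,\ldots,x_i\}$ during the $i$-th excursion is controlled by a geometric random variable of uniformly bounded parameter, via an excursion argument in the spirit of those developed in \cite{AS17,AS19a,AS20}. Summing $m$ such contributions and applying a Chernoff bound yields \eqref{plan:key}.

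For the localization statement \eqref{finite.balls.localtime}, we combine \eqref{plan:key} with Theorem~\ref{theo.scenario}. On the event $\{L>t\}$, \eqref{plan:key} implies that $D>t/K$ with conditional probability tending to one. Theorem~\ref{theo.scenario}, applied at level $\epsilon':=\epsilon/(2K)$, then produces a single random box $Q$ of side $L(\epsilon')(t/K)^{1/d}$ containing all but at most $\epsilon'\,t$ of the intersection points. A further application of \eqref{plan:key} to the intersection restricted to the complement of $Q$ shows that the local time of $\tilde S$ on $\cR_\infty\setminus Q$ is at most $K\epsilon'\,t=\epsilon\,t/2$ with high probability. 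Covering $Q$ by a bounded number $N(\epsilon)\asymp L(\epsilon')^d$ of boxes of side $t^{1/d}$ then yields the required family $x_1,\ldots,x_N$.

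The principal obstacle is establishing \eqref{plan:key}: the local times $(\tilde\ell_\infty(x_i))_i$ are positively correlated via the shared trajectory of $\tilde S$, so no naive comparison with a sum of iid geometric variables is available. The excursion decomposition described above, which relies on the fact that from any point of $\cR_\infty$ the walk $\tilde S$ has, conditionally on $S$, a uniformly positive probability of performing a macroscopic excursion avoiding $\cR_\infty$, is the tool that supplies the needed decoupling.
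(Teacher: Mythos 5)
Your lower-bound step (namely $D := |\cR_\infty\cap\tilde\cR_\infty| \le L := \tilde\ell_\infty(\cR_\infty)$ pointwise, so the lower bound for $D$ transfers to $L$) is fine, and the paper's lower bound is obtained by the same reduction. The trouble is the central comparison bound on which your entire upper bound and localization argument rest: that for some fixed $K$ and $c>0$ one has $\bP(L\ge Km,\, D\le m)\le e^{-cm}$ for all $m$. This is false, and the supporting claim (that from any point of $\cR_\infty$ the walk $\tilde S$ has, conditionally on $S$, a uniformly positive probability of making a macroscopic excursion avoiding $\cR_\infty$) fails exactly on the dominant atypical configurations.

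Here is why the comparison bound fails. By Proposition~\ref{prop-as17}, with probability at least $\exp(-\kappa m^{1-2/d})$ the walk $S$ covers a constant fraction $\rho$ of the cube $Q:=Q(0,m^{1/d})$. Independently, $\tilde S$ can be forced to stay inside $Q$ for time $T=Km$ at a cost of order $\exp(-cT/m^{2/d})=\exp(-cK m^{1-2/d})$. On the intersection of these two events, $L\ge\tilde\ell_\infty(\cR_\infty\cap Q)$, which is of order $\rho T = \rho Km$, whereas $D\le |\cR_\infty\cap Q|+O(1)$ is at most of order $\rho m$ since the confined range saturates at the size of the cube. Thus $\bP(L\ge c_1 Km,\, D\le c_2 m)\ge\exp(-c_3 K m^{1-2/d})$, which for large $m$ is vastly bigger than $e^{-cm}$. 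What breaks in your excursion argument is the uniformity of the geometric parameter: once $\cR_\infty$ has occupation density of order one at scale $r$, the probability that $\tilde S$ crosses that region without touching $\cR_\infty$ is of order $\exp(-cr^2)$, not bounded below. Equivalently, the relevant geometric parameter is of order $1/\sup_x G(\cR_\infty-x)$, which on the dominant deviation event can be as small as $t^{-2/d}$, not $O(1)$.

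The paper makes no comparison to $D$ at all. It truncates to a finite horizon $n=\exp(\beta t^{1-2/d})$ via \eqref{finite.time}, applies Lemma~\ref{lem.tail} directly with $\Lambda=\cR_n$, and invokes \eqref{borne.simple} to show that $\sup_x G(\cR_n-x)\le C t^{2/d}$ except on an event of probability $\exp(-ct^{1-2/d})$; these two facts combine to give exactly the stated rate for $L$. The localization \eqref{finite.balls.localtime} is obtained similarly from \eqref{borne.simple.2} together with the packing bound of Proposition~\ref{cor.theo.covering}, not by bootstrapping Theorem~\ref{theo.scenario}. The best version of your comparison one could hope to prove is $\bP(L\ge Km,\, D\le m)\le\exp(-cm^{1-2/d})$, but establishing that already requires controlling $\sup_x G(\cR_n-x)$ on the atypical event, which is precisely the content of Proposition~\ref{prop.main} --- so one might as well apply it to $L$ directly, as the paper does.
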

Analogous results such as Theorems \ref{theo.main} and \ref{theo.scenario} would hold for $\widetilde \ell_\infty(\cR_\infty)$, conditionally on obtaining first an analogue of \eqref{BBH.Phet} for $\tilde \ell_{bt}(\cR_{bt})$, which is presumably true, but not available at the moment.

Let us remark also that the problem we address here
has a flavor of a much studied problem of {\it random walk in random landscape},
where the random landscape is produced here by another independent walk. Here also, it
appears interesting to study a {\it quenched regime}, where one walk is frozen in a typical realization, 
whereas the second tries to hit $t$ sites of the first 
range. This problem is still untouched, and we believe that our techniques
will shed some light on it.

\subsection{Proof strategy} 
While the proof in \cite{KMSS94} used a moment method and some ingenious computations, our proof is based on more geometric arguments.

There are two parts. 
In the first one, we show that conditionally on the intersection event, with probability going to one, the whole intersection takes place in a finite number of boxes 
(as in Proposition \ref{prop.localtime} above). In the second part we use the full power of the LDP \eqref{BBH.Phet} and the concavity of the speed $t\mapsto t^{1-\frac 2d}$ to reduce the number of boxes 
where the action occurs to a unique box, which gives Theorem \ref{theo.scenario}, and then we also deduce Theorem \ref{theo.main}.

The first part is itself obtained in three steps. First we reduce the time window to a finite time interval, using that it is unlikely for one walk to intersect the range of the other 
walk after a time 
of order $\exp(\beta\cdot t^{1-\frac 2d})$, for some large $\beta$. This leaves however a lot of room for the places where the action could take place (since we recall it holds in a box with volume of order $t$ only). 
In particular decomposing space into boxes and using a union bound type argument would not work, at least not directly. Our main idea to overcome this difficulty is to 
divide space according to the occupation density of the range, which we do at different space-scales depending on the density we are considering,   
in a similar fashion as in \cite{AS19a,AS19b}. Then we use a fundamental tool from \cite{AS19a} which gives a priori bounds on the size of these regions, with the conclusion that it is only 
in those with high density (of order one) that the intersection occurs. Finally we use another recent  
result from \cite{AS20}, which bounds the probability to cover a positive fraction of any {\it fixed} union of distant boxes. When we further impose that these boxes are visited by another independent walk, one can sum over all possible centers of the boxes, and this yields some bound on the number of boxes, with volume of the right order, that are needed to cover the region where the intersection occurs.

For the second part of the proof, we decompose 
the journeys between a finite number of boxes into excursions either within one box, or joining two boxes.  
Then some surgery is applied. We cut the excursions between different boxes and replace them by excursions drawn independently with  starting points sampled according to the harmonic measure. 
This allows to compare the probability of the event when the walk realizes the intersection in $N$ different boxes, to the product of the probabilities of realizing (smaller) intersection in each of these boxes, and one can then use \eqref{BBH.Phet} to bound these probabilities. This is also where the concavity is used, to show that one box is better than many, and the surgery arguments are then used again to restaure the journeys and yield Theorem \ref{theo.main}.

\subsection{Organization}
The paper is organized as follows. In the next section we recall the main notation, and the tools that will be used in the proofs, which for the most part appeared in our previous works 
\cite{AS17,AS19a, AS19b, AS20}. In Section \ref{sec-tech} we give a detailed plan of the proofs of ours main results. The latter are then 
proved in the remaining sections \ref{sec.mainprop}--\ref{sec.propfiniteballs}--\ref{sec.proofstheo}.

\section{Notation and main tools}
\subsection{Notation and basic results}
Let $\{S_n\}_{n\ge 0}$ be a simple random walk on $\Z^d$.  
We denote by $\bP_x$ its law starting from $x$, which we abbreviate as $\bP$ when $x=0$. 
We mainly assume here that $d\ge 5$, yet some results hold for all $d\ge 3$, in which case we shall mention it explicitly.
For $n \in \N\cup \{\infty\}$, we write the range of the walk up to time $n$ as $\cR_n := \{S_0,\dots,S_n\}$.  
More generally for $n\le m$ two (possibly infinite) integers, we consider the range between times $n$ and $m$, defined as $\cR[n,m] := \{S_n,\dots,S_m\}$.  
For $\Lambda\subseteq \Z^d$, and $n\in \N\cup \{\infty\}$, we define the time spent in $\Lambda$ as 
$$\ell_n(\Lambda) := \sum_{k=0}^n \1\{S_k\in \Lambda\},$$
and simply let $\ell_n(z)$ be the time spent on a site $z\in \Z^d$.  
The {\bf Green's function} is defined by 
$$G(x,z) := \sum_{k=0}^\infty \bP_x(S_k=z) = \bE_x[\ell_\infty(z)].$$ 
By translation invariance one has for any $x,z\in \Z^d$, $G(x,z) = G(0,z-x)=: G(z-x)$. Thus for any $\Lambda\subset \Z^d$, and any $x\in \Z^d$, 
$$\bE_x[\ell_\infty(\Lambda)] = \bE[\ell_\infty(\Lambda-x)] =\sum_{z\in \Lambda-x}G(z-x) =: G(\Lambda-x).$$
Furthermore, there exists a constant $C>0$, such that for all $z\in \Z^d$ (see \cite{LL}), 
\begin{equation}\label{Green}
G(z) \le  \frac{C}{1+\|z\|^{d-2}}, 
\end{equation}
where $\|\cdot \|$ denotes the Euclidean norm, and for all $r>0$ and $z\in \Z^d$, 
\begin{equation}\label{bound.hit}
\bP(\cR_\infty \cap Q(z,r)\neq \emptyset) \le Cr^{d-2}G(z). 
\end{equation}

\subsection{Preliminaries}
We recall and discuss here a series of known results on which relies our proof. Most of them come from our recent works \cite{AS17,AS19a, AS19b, AS20}.

In fact the first one is older and  
shows that the 
tail distribution of the time spent in a region is controlled simply by its mean value, when starting from the worst point. We recall its short proof for completeness. 
\begin{lemma}[\cite{AC07}] \label{lem.tail}
Let $\Lambda\subseteq \Z^d$ be a (non necessarily finite) subset of $\Z^d$, $d\ge 3$. Then for any $t>0$, 
$$\bP(\ell_\infty(\Lambda)>t) \le 2 \exp\left( -\frac{t\cdot \log 2 }{2\sup_{x\in \Lambda} G(\Lambda-x)} \right).$$
\end{lemma}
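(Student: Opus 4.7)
The plan is to use an iteration of the strong Markov property, combined with a one-step bound coming from the first moment estimate $\bE_x[\ell_\infty(\Lambda)] = G(\Lambda-x)$. Set $M := \sup_{x\in\Lambda} G(\Lambda-x)$. By Markov's inequality, for every $x\in\Lambda$,
\begin{equation*}
\bP_x\bigl(\ell_\infty(\Lambda) > 2M\bigr) \;\le\; \frac{\bE_x[\ell_\infty(\Lambda)]}{2M} \;\le\; \frac{1}{2}.
\end{equation*}
Thus from a worst-case starting point in $\Lambda$, the occupation of $\Lambda$ exceeds $2M$ with probability at most $1/2$.

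Next I would define the sequence of stopping times $T_k := \inf\{n\ge 0 : \ell_n(\Lambda) \ge 2kM\}$ for $k\ge 1$. On the event $\{T_k<\infty\}$ we have $S_{T_k}\in\Lambda$ (since $\ell$ only increases when the walk is in $\Lambda$), and the event $\{T_{k+1}<\infty\}\subseteq\{T_k<\infty\}$ can only occur if the walk spends at least an additional $2M$ units of time in $\Lambda$ after $T_k$. By the strong Markov property applied at $T_k$,
\begin{equation*}
\bP\bigl(T_{k+1} < \infty \,\big|\, \mathcal{F}_{T_k}\bigr)\1\{T_k<\infty\} \;\le\; \sup_{x\in\Lambda}\bP_x\bigl(\ell_\infty(\Lambda) \ge 2M\bigr)\1\{T_k<\infty\} \;\le\; \frac{1}{2}\1\{T_k<\infty\}.
\end{equation*}
Iterating gives $\bP(T_k<\infty) \le 2^{-k}$ for all $k\ge 1$.

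To conclude, for a given $t>0$ choose $k := \lfloor t/(2M)\rfloor$, so that $\{\ell_\infty(\Lambda) > t\}\subseteq\{T_k<\infty\}$, and hence
\begin{equation*}
\bP\bigl(\ell_\infty(\Lambda) > t\bigr) \;\le\; 2^{-k} \;\le\; 2\cdot 2^{-t/(2M)} \;=\; 2\exp\!\left(-\frac{t\,\log 2}{2M}\right),
\end{equation*}
which is the desired bound. There is no real obstacle here: the argument is a textbook iteration, and the only substantive input is the identification of $\bE_x[\ell_\infty(\Lambda)]$ with $G(\Lambda-x)$ together with the Markov inequality, both of which are already recorded in the preceding paragraphs. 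The proof applies verbatim to infinite $\Lambda$, provided $M<\infty$ (otherwise the stated inequality is trivial).
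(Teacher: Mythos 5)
Your proof is correct and takes essentially the same approach as the paper's one-line argument, which asserts that by Markov's inequality and the Markov property the random variable $\ell_\infty(\Lambda)/(2\sup_{x\in\Lambda}G(\Lambda-x))$ is stochastically dominated by a geometric random variable of parameter $1/2$; your stopping-time iteration is exactly the standard way of establishing that domination, just written out in full detail.
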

\begin{proof} The result simply follows from the fact that by Markov's inequality (and the Markov property), 
the random variable $\frac{\ell_\infty(\Lambda)}{ 2\sup_{x\in \Lambda} \mathbb E_x[\ell_\infty(\Lambda)]}=\frac{\ell_\infty(\Lambda)}{ 2\sup_{x\in \Lambda} G(\Lambda-x)} $, is stochastically bounded by a geometric random variable with parameter $1/2$. 
\end{proof} 
We need also to estimate the expected time spent (or equivalently the sum of the Green's function) on the range of an independent random walk.  For this we use several facts. The first one is the following well-known simple lemma.
\begin{lemma}\label{lem.sumGreen}
There exists $C>0$, such that for any finite subset $\Lambda\subseteq \Z^d$, $d\ge 3$, one has 
$$G(\Lambda)= \sum_{z\in \Lambda}G(z) \le C |\Lambda|^{2/d}.$$
\end{lemma}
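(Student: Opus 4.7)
The plan is to exploit the standard upper bound \eqref{Green} and a rearrangement argument, reducing the problem to estimating the sum of $1/(1+\|z\|^{d-2})$ over a ball whose radius scales like $|\Lambda|^{1/d}$.

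Concretely, I would first invoke \eqref{Green} to write
\[
G(\Lambda) = \sum_{z\in \Lambda} G(z) \le C \sum_{z\in \Lambda} \frac{1}{1+\|z\|^{d-2}}.
\]
Because the function $z\mapsto 1/(1+\|z\|^{d-2})$ is radially non-increasing, the sum on the right-hand side is maximized, among all subsets of cardinality $n := |\Lambda|$, by taking the $n$ lattice points closest to the origin. Any such set is contained in a Euclidean ball $B(0,R)$ with $R \le C' n^{1/d}$ (since $|B(0,R)\cap \Z^d|$ is of order $R^d$). Thus
\[
G(\Lambda) \;\le\; C\sum_{\substack{z\in \Z^d \\ \|z\|\le R}} \frac{1}{1+\|z\|^{d-2}}.
\]

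The remaining step is a direct computation: slicing the ball into spherical shells (or, equivalently, bounding the discrete sum by the corresponding integral), and using that the number of lattice points with $\|z\|\in[k,k+1)$ is $O(k^{d-1})$, I would estimate
\[
\sum_{\|z\|\le R} \frac{1}{1+\|z\|^{d-2}} \;\le\; C'' \sum_{k=0}^{\lceil R\rceil} \frac{k^{d-1}}{1+k^{d-2}} \;\le\; C''' \int_0^{R+1} r\, dr \;\le\; C'''' R^2,
\]
which substituted above gives $G(\Lambda)\le C\, n^{2/d}$, as required.

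I do not expect any real obstacle here: the statement is essentially a rearrangement inequality combined with the elementary fact that the series $\sum_{z\in \Z^d} 1/(1+\|z\|^{d-2})$ grows like $R^2$ when truncated to $\|z\|\le R$ (finite in no dimension, but diverging at a rate dictated by the competition between volume growth $r^{d-1}$ and decay $r^{-(d-2)}$). The only mild subtlety is justifying the rearrangement step on $\Z^d$, which one simply does by enumerating lattice points in non-decreasing order of norm and observing that swapping a far point of $\Lambda$ for a closer point not in $\Lambda$ can only increase the sum.
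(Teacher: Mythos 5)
Your argument is correct and is precisely the paper's proof, just spelled out in more detail: the paper's one-line proof says the sum is maximized (up to a constant) when $\Lambda$ fills a ball of radius of order $|\Lambda|^{1/d}$, and you make the rearrangement step and the shell-summation explicit.
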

\begin{proof}
The result follows from the bound \eqref{Green}, and observing that the resulting sum is maximized (at least up to a constant) when points of $\Lambda$ are all contained in a ball of side-length of order $|\Lambda|^{1/d}$. 
\end{proof}
Now we decompose the points of the range in several subsets according to the occupation density in some neighborhoods of these points, and use that Green's function is additive in the sense that for any disjoints subsets 
$\Lambda,\Lambda'\subseteq \Z^d$, it holds $G(\Lambda \cup \Lambda') = G(\Lambda) + G(\Lambda')$. Thus we need to estimate the 
Green's function of regions with some prescribed density, which is the content of Lemma \ref{lem.tps.passe} below.    
Recall that for $r\ge 1$, and $x\in \Z^d$, we set  
$$Q(x,r) := [x-r/2, x+r/2)^d,$$ 
the cube centered at $x$ of side length $r$. 
The next result is Lemma 4.3 from \cite{AS19b}. It can be proved using a very similar argument as for the proof of Lemma \ref{lem.sumGreen}. 
\begin{lemma}[\cite{AS19b}] \label{lem.tps.passe}
Assume $d\ge 3$. There exists a constant $C>0$, such that the following holds. For any integer $r\ge 1$, any $\rho>0$, and any finite subset $\Lambda\subseteq \Z^d$, satisfying 
$$|\Lambda \cap Q(z,r) | \le \rho \cdot r^d, \quad \text{for all }z\in r\Z^d,$$
one has 
$$G(\Lambda\cap Q(0,r)^c) \le C\rho^{1-\frac 2d} |\Lambda|^{2/d}. $$ 
\end{lemma}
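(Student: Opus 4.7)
The approach is to mimic the proof of Lemma \ref{lem.sumGreen}, but to exploit the density constraint in order to improve the bound. I would first partition $Q(0,r)^c$ into the translates $\{Q(z,r) : z \in r\Z^d \setminus \{0\}\}$, and use \eqref{Green} together with the observation that $\|y\| \asymp \|z\|$ whenever $y \in Q(z,r)$ and $\|z\|$ is large compared to $r$, so that $G(y) \le C/\|z\|^{d-2}$ on each such cube. Setting $n_z := |\Lambda \cap Q(z,r)|$, the hypothesis gives $n_z \le \rho r^d$ and $\sum_z n_z \le |\Lambda|$, hence
$$
G(\Lambda \cap Q(0,r)^c) \;\le\; C \sum_{z \in r\Z^d \setminus \{0\}} \frac{n_z}{1+\|z\|^{d-2}}.
$$

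The second step is to maximize the right-hand side over non-negative sequences $(n_z)$ subject to the two constraints. Since the weights $1/(1+\|z\|^{d-2})$ are decreasing in $\|z\|$, a standard rearrangement argument shows that the worst case is to saturate $n_z = \rho r^d$ on the cubes closest to the origin, effectively filling (in $r\Z^d$-coordinates) a ball of radius $\asymp N^{1/d}$, with $N := \lceil |\Lambda|/(\rho r^d)\rceil$. Grouping cubes by their distance class $\|z\| \asymp jr$, each class contains $O(j^{d-1})$ cubes, and the sum is controlled, up to constants, by
$$
\rho r^d \sum_{j=1}^{N^{1/d}} \frac{j^{d-1}}{(jr)^{d-2}} \;=\; \rho r^2 \sum_{j=1}^{N^{1/d}} j \;\le\; C \rho r^2 N^{2/d}.
$$
Substituting $N = |\Lambda|/(\rho r^d)$ yields precisely $C\rho^{1-2/d} |\Lambda|^{2/d}$, as claimed.

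I do not foresee any real obstacle; the argument is essentially the same worst-case packing used in Lemma \ref{lem.sumGreen}, with the density cap $\rho r^d$ forcing the mass to spread out into a larger ball than in the unconstrained case, which is precisely what produces the extra $\rho^{1-2/d}$ factor. The only mild technicalities I would expect are: (i) the $O(1)$ cubes neighboring the origin, where the comparison $\|y\| \asymp \|z\|$ degenerates, but whose contribution is bounded by $C \rho r^d \cdot r^{-(d-2)} = C\rho r^2 \le C\rho^{1-2/d}|\Lambda|^{2/d}$ in the non-trivial regime $|\Lambda| \ge \rho r^d$; and (ii) justifying the greedy maximization rigorously, which can be done either by an exchange argument or by splitting into dyadic shells and bounding each shell's contribution by $\min(\rho r^d \cdot \#\text{cubes in shell},\,|\Lambda|) \cdot \|z\|^{-(d-2)}$ before summing. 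The hypothesis $d \ge 3$ is used only to ensure that $G$ is finite.
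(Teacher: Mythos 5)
Your approach is the one the paper itself gestures at (the paper does not reproduce the proof from \cite{AS19b}; it only says ``it can be proved using a very similar argument as for the proof of Lemma~\ref{lem.sumGreen}''), and the packing computation is correct: partitioning $Q(0,r)^c$ into the cubes $Q(z,r)$, $z\in r\Z^d\setminus\{0\}$, using \eqref{Green} to bound the Green's function by $C/(1+\|z\|^{d-2})$ on each cube, reducing to a fractional-knapsack maximization under the two constraints $n_z\le \rho r^d$ and $\sum_z n_z\le |\Lambda|$, and summing over shells gives exactly $C\rho^{1-2/d}|\Lambda|^{2/d}$ once you substitute $N\asymp |\Lambda|/(\rho r^d)$. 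The dyadic-shell alternative you propose is also the cleaner way to make the rearrangement rigorous.

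The one place where your write-up is not fully closed is the regime you dismiss as trivial, namely $|\Lambda|<\rho r^d$. In that case $N=1$, your shell sum stops at $j=1$ and gives the bound $C\rho r^2$, which need \emph{not} be $\le C\rho^{1-2/d}|\Lambda|^{2/d}$; and you cannot simply fall back on Lemma~\ref{lem.sumGreen} either, since that gives $C|\Lambda|^{2/d}$, which beats the target only if $\rho\ge 1$, whereas in the paper's application $\rho=\rho_i=2^{-i}$ is small. The fix is short: every $y\in\Lambda\cap Q(0,r)^c$ satisfies $\|y\|\ge r/2$, so trivially $G(\Lambda\cap Q(0,r)^c)\le C|\Lambda| r^{2-d}$, and when $|\Lambda|\le \rho r^d$ one has $|\Lambda|^{1-2/d}\le(\rho r^d)^{1-2/d}$, hence $|\Lambda| r^{2-d}\le \rho^{1-2/d}|\Lambda|^{2/d}$. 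In other words, in the ``small $|\Lambda|$'' regime it is the constraint $\sum_z n_z\le|\Lambda|$ that you should saturate in the LP, not $n_z\le\rho r^d$; the target bound then holds with the inequality $|\Lambda|\le\rho r^d$ used in exactly the opposite direction from the one you invoked. With that case added, your argument is complete.
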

We now turn to estimating the number of points in the range of a random walk, around which the walk realizes a certain occupation density. For $n\in \N$, $r\ge 1$, and $\rho >0$, we define 
\begin{equation}\label{def.Rnrho}
\cR_n(r,\rho)=\{x\in \cR_n \, :\, |\cR_n\cap Q(x,r)|>\rho\cdot r^d\}. 
\end{equation}
  
\begin{theorem}[\cite{AS19a}] \label{theo-folding}
Assume $d\ge 3$.
There are positive constants $ \kappa$, and $C_0$, such that for any $n$, $r$ and $L$ positive integers and $\rho>0$,  satisfying
\begin{equation}\label{cond-folding}
\rho r^{d-2}\ge C_0\cdot \log n, 
\end{equation}
one has
\begin{equation*}
 \bP\big( |\cR_n(r,\rho)|> L\big)\le \exp\big(- \kappa\cdot
\rho^{2/d} \cdot L^{1-2/d}\big).
\end{equation*}
\end{theorem}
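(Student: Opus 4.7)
\emph{Plan.} The guiding intuition is that $\{|\cR_n(r,\rho)|>L\}$ forces the walk to localize: it must spend a total time of order $\rho L$ in a random set $\mathcal D\subseteq \Z^d$ of volume at most of order $L$. By Lemma \ref{lem.sumGreen}, such a set satisfies $\sup_y G(\mathcal D-y)\leq CL^{2/d}$, and Lemma \ref{lem.tail} then delivers a probability bound of order $\exp(-c\rho L^{1-2/d})$---stronger than the claimed $\exp(-\kappa\rho^{2/d}L^{1-2/d})$ by a factor $\rho^{1-2/d}$ in the exponent. This slack is precisely what will be needed to absorb the entropy coming from the choice of the random localization region $\mathcal D$.

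Concretely I would proceed in three steps. \emph{(Geometric extraction.)} Partition $\Z^d$ into cubes of side $r$ and call such a grid cube \emph{dense} if it contains at least $c_d\rho r^d$ points of $\cR_n$. By a pigeonhole argument on the at most $2^d$ grid cubes that meet $Q(x,r)$, every $x\in \cR_n(r,\rho)$ lies within distance $r$ of a dense cube, so the union $\mathcal D$ of the $K\lesssim L/r^d$ dense cubes needed to cover a positive fraction of $\cR_n(r,\rho)$ obeys $|\mathcal D|\lesssim L$ and $\ell_n(\mathcal D)\geq c\rho L$. \emph{(Fixed-shape concentration.)} For any deterministic union $\mathcal D'$ of $K$ grid cubes with $|\mathcal D'|\lesssim L$, Lemma \ref{lem.sumGreen} gives $\sup_y G(\mathcal D'-y)\leq CL^{2/d}$, and Lemma \ref{lem.tail} then yields $\bP_y(\ell_\infty(\mathcal D')\geq c\rho L)\leq \exp(-c'\rho L^{1-2/d})$ uniformly in the starting point $y$. \emph{(Entropy.)} A union bound over the placements of the $K$ dense cubes, with the count biased by \eqref{bound.hit} to reflect the cost of the walk actually having to hit each of them, produces an entropy factor of at most $\exp(CK\log n)=\exp(C(L/r^d)\log n)$. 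The hypothesis $\rho r^{d-2}\geq C_0\log n$ with $C_0$ large is calibrated precisely to absorb this factor into the slack $(\rho-\rho^{2/d})L^{1-2/d}$ in the concentration exponent, yielding the final estimate $\exp(-\kappa\rho^{2/d}L^{1-2/d})$.

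\emph{Main obstacle.} The delicate point is the entropy step. A naive union bound over $K$-subsets of grid centers is hopeless and would destroy the exponent; the argument must exploit both the hitting estimate \eqref{bound.hit}, to bias the count toward cubes the walk is likely to reach, and the clustering tendency of dense cubes, which reduces the effective entropy below the trivial bound. The logarithmic hypothesis $\rho r^{d-2}\geq C_0\log n$ is set precisely at the threshold where the one-cube cost $\rho r^{d-2}$ dominates the per-cube entropy $\log n$; pushing below this threshold would require a genuine multi-scale refinement chaining distant dense cubes through Green's-function estimates.
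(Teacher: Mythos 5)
The theorem is imported verbatim from \cite{AS19a}; the present paper does not reproduce a proof, so there is nothing in the text to compare your argument against. Your proposal, however, rests on a sign error that is fatal to the whole plan. You claim that the fixed-shape estimate $\exp(-c\rho L^{1-2/d})$, obtained by combining Lemma \ref{lem.sumGreen} (which gives $\sup_y G(\mathcal D-y)\le CL^{2/d}$ for $|\mathcal D|\lesssim L$) with Lemma \ref{lem.tail}, is \emph{stronger} than the target $\exp(-\kappa\rho^{2/d}L^{1-2/d})$ by a factor $\rho^{1-2/d}$, and you plan to spend this surplus on the union bound. The inequality points the other way: $\cR_n(r,\rho)$ can only be nonempty when $\rho<1$ (since $|\cR_n\cap Q(x,r)|\le r^d$), and for $\rho<1$ one has $\rho<\rho^{2/d}$ because $0<2/d<1$. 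Hence $c\rho L^{1-2/d}<\kappa\rho^{2/d}L^{1-2/d}$: your fixed-shape bound already falls short of the conclusion, \emph{before} you pay any entropy, by the multiplicative factor $\rho^{1-2/d}<1$ in the exponent. There is no slack to absorb the union over placements.

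This is not a detail one can fix by tightening constants. Viewing the dense region merely as a deterministic set of volume $\lesssim L$, charging the local time $\gtrsim \rho L$ against the Green's-function bound $CL^{2/d}$, can only ever produce an exponent linear in $\rho$; the gain from $\rho$ to $\rho^{2/d}$ is precisely the nontrivial content the paper flags just after the statement (``the elimination of the $L$ dependence is fundamental here''), and requires using more than the volume of $\mathcal D$ --- one has to exploit quantitatively that an $\rho$-dense set of $\gtrsim L$ points is spread over a region of volume of order $L$ rather than $\rho L$, which inflates the effective capacity above $cL^{1-2/d}$. There is also a secondary issue: even for the weaker exponent $\rho L^{1-2/d}$, your entropy step does not close as written. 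With $K\sim L/r^d$ cubes the biased count costs of order $K\log n\le \rho L/(C_0 r^2)$, which exceeds $\rho L^{1-2/d}$ whenever $L>r^d$, and no choice of $C_0$ can compensate a wrong power of $L$; one is forced to treat the cubes quasi-independently (in the spirit of Theorem \ref{theo.finitecovering}), which again only reproduces the linear-in-$\rho$ exponent, not $\rho^{2/d}$.
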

A weaker version of this result first appeared in \cite{AS17}, with the stronger condition $\rho r^{d-2} \ge C_0(\frac{L}{\rho r^d})^{2/d} \log n$, and the elimination of the $L$ dependence is fundamental here.

Finally the following result is used to reduce the number of boxes where most of the intersection occurs. For $r\ge 1$, some integer, we denote by $\mathcal X_r$ the collection of finite 
subsets of $\Z^d$, whose points are at distance at least $r$ from each other. For $\mathcal C\subseteq \Z^d$, we let $Q_r(\C):= \cup_{x\in \C} Q(x,r)$. 
\begin{theorem}[\cite{AS20}] \label{theo.finitecovering}
Assume $d\ge 3$. 
There exist positive constants $\kappa$ and $C$, such that for any $\rho>0$, $r\ge 1$ and $\mathcal C\in \mathcal X_{4r}$, satisfying 
$$\rho r^{d-2} > C\log |\C|,$$
one has 
$$\bP\left(\ell_\infty(Q(x,r)) > \rho r^d, \ \forall x\in \C \right) \le C\exp(-\kappa \rho\cdot \capa (Q_r(\C))). $$
\end{theorem}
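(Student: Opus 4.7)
The target bound has a random-interlacements flavour: a single walk forced to have occupation density $\rho$ on every box of $\mathcal C$ should pay roughly $\exp(-\kappa\rho\,\capa(Q_r(\mathcal C)))$, matching the cost of covering $Q_r(\mathcal C)$ by interlacements of intensity $\rho$. My strategy combines a sharp one-box concentration tail with an excursion decomposition relative to the union $A := Q_r(\mathcal C)$.

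\emph{Step 1 --- one-box tail and slack.} For any box $Q(x,r)$ and any starting point $y$, Lemma \ref{lem.sumGreen} yields $G(Q(x,r)-y)\le C r^2$ (since $|Q(x,r)|=r^d$), so Lemma \ref{lem.tail} gives
\begin{equation*}
\bP_y\bigl(\ell_\infty(Q(x,r)) > s\bigr) \le 2\exp\!\bigl(-c\, s/r^2\bigr)\qquad \text{for all } s>0.
\end{equation*}
The hypothesis $\rho r^{d-2} > C\log|\mathcal C|$ is precisely what will let us absorb union bounds of polynomial size in $|\mathcal C|$ into this tail. This is the basic building block.

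\emph{Step 2 --- excursion decomposition and capacity cost.} Let $\tau_1<\sigma_1<\tau_2<\sigma_2<\dots$ be successive entry and exit times of $A$, and $N$ the total number of excursions (a.s.\ finite since $d\ge 3$). Write $L_k^x$ for the time spent in $Q(x,r)$ during excursion $k$, so that $\ell_\infty(Q(x,r))=\sum_{k=1}^N L_k^x$. By the strong Markov property the excursions are independent given the entry points $(S_{\tau_k})$, and Step 1 bounds the probability that a single excursion contributes more than order $r^2$ to any single box. On the event of interest, for each $x\in\mathcal C$ we therefore need either a rare long excursion (penalised by $\exp(-c\rho r^{d-2})$ per box, combined via the slack of Step 1) or on the order of $\rho r^{d-2}$ \emph{typical} re-entries into $A$. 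The cost of each re-entry is governed by the equilibrium measure: using the last-exit decomposition, the probability that the walk re-enters $A$ after an exit is at most $1-c\,\capa(A)/R^{d-2}$ for the ambient scale $R$, and iterating over the required number of re-entries produces a factor $\exp(-\kappa\rho\,\capa(A))$.

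\emph{Main obstacle.} The crux is turning the naive exponent $\rho r^{d-2}|\mathcal C|$, which follows by applying Step 1 independently in each box with a union bound, into $\rho\,\capa(Q_r(\mathcal C))$. The improvement is genuine when boxes are clumped, because a single excursion can accumulate time in several nearby boxes, reducing the effective re-entry cost; quantitatively this is what $\capa(A)\le r^{d-2}|\mathcal C|$ with possibly large gap records. Making it rigorous requires careful bookkeeping of which boxes each excursion visits and using harmonic-measure on $A$ to track the joint law of the entry points; the assumption $\mathcal C\in\mathcal X_{4r}$ ensures the boxes are disjoint and that inter-box harmonic-measure estimates behave classically, while $\rho r^{d-2}>C\log|\mathcal C|$ provides the room to pay the combinatorial union bounds arising in this bookkeeping.
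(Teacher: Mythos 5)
The paper states this result as a citation from \cite{AS20} and provides no proof of its own, so there is no internal proof to compare against. Judged on its own terms, your write-up is a plan, not a proof: Step 1 is correct and standard (Lemmas \ref{lem.sumGreen} and \ref{lem.tail} indeed give $\bP_y(\ell_\infty(Q(x,r))>s)\le 2\exp(-cs/r^2)$ uniformly in $y$), but the passage from there to the capacity exponent is never actually carried out. You name the crux --- converting the naive per-box cost into $\rho\,\capa(Q_r(\mathcal C))$ --- and then declare that the hypotheses ``provide the room'' for the bookkeeping, without doing it. That is exactly the content of the theorem, so as written this is circular.

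Concretely, two gaps. First, the re-entry estimate ``probability of re-entering $A$ after an exit is at most $1-c\,\capa(A)/R^{d-2}$'' has no meaning until you specify the exit scale $R$; for a fixed dilate of $A$ this probability is bounded away from $1$ by a constant independent of $\capa(A)$, and to see the capacity you need to track the full joint distribution of hitting points via harmonic measure and last-exit decompositions, not just a scalar re-entry probability. Second, and more basic, the number of excursions alone does not control the joint local times: a single excursion can accumulate local time $\gg r^2$ in a box, and although Step 1 says this is unlikely for one fixed box, the event in the theorem is a conjunction over $|\mathcal C|$ boxes whose local times are positively correlated (which is precisely why the naive product bound $\exp(-c\rho r^{d-2}|\mathcal C|)$ is not available). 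You invoke conditional independence of excursions given their entry points, which is the right idea, but you never turn it into an inequality: you would need to show that on the event of interest the walk must make order $\rho\,\capa(A)/\!\sup_{\partial A}\!$(escape probability) excursions to $A$, and bound the probability of that many excursions by $\exp(-\kappa\rho\,\capa(A))$ after summing over the entry-point configurations. That chain of estimates is the whole theorem and is entirely absent here.
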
 
Using the well-known bound $\capa(\Lambda) > c|\Lambda|^{1-2/d}$, for any finite $\Lambda\subseteq \Z^d$, and some universal constant $c>0$, we have $\capa(Q_r(\C))\ge cr^{d-2} |\C|^{1-2/d}$. This latter bound is used later.

\section{Plan of the proof} \label{sec-tech}
Recall that we consider two independent walks $\{S_n\}_{n\ge 0}$ and $\{\widetilde S_n\}_{n\ge 0}$. All quantities associated to the second walk will be decorated with a tilde. With a slight abuse of notation we still denote by $\bP$ the law of the two walks.

The first step is to reduce the problem to a finite time horizon. 
For this we simply use a first moment bound, and the well-known fact that for any $n\ge 1$ (see \cite[Proposition 3.2.3]{Law96}), for some constant $C>0$, 
$$\bE[\tilde \ell_\infty (\cR[n,\infty))]= \sum_{z\in \Z^d}G(z) \cdot \bP(z\in \cR[n,\infty))\le C n^{\frac{4-d}{2}}.$$
Using next Markov's inequality we deduce (see also \cite[Lemma 9]{ET60} for a similar statement), 
\begin{equation}\label{finite.time}
\bP(\tilde \cR_\infty \cap \cR[n,\infty) \neq \emptyset   ) \le \bP(\tilde \ell_\infty (\cR[n,\infty))\ge 1  ) \le \bE[\tilde \ell_\infty(\cR[n,\infty))] \le C n^{\frac{4-d}{2}}. 
\end{equation} 
Thanks to this inequality, it suffices in fact to consider only the intersection of the two walks up to a time $n$ of order $\exp(\beta t^{1-2/d})$, with $\beta$ some appropriate constant.

The second step is the following proposition. Recall the definition \eqref{def.Rnrho}. 
\begin{proposition}\label{prop.main}
For any $\beta\ge 1$, there exist positive constants $c$ and $C$, such that for any $t>0$, one has with $n:= \exp(\beta t^{1-\frac 2d})$, 
\begin{equation}\label{borne.simple}
\bP\left(  \sup_{x\in \Z^d} G(\cR_n-x) > Ct^{2/d}\right) \le C\exp(-ct^{1-\frac 2d}). 
\end{equation}
Furthermore, for any $\epsilon>0$ and $K>0$, there exists $\rho=\rho(\epsilon,K,\beta)$, and $A=A(\epsilon,K,\beta)$, such that  
\begin{equation}\label{borne.simple.2}
\bP\left(  \sup_{x\in \Z^d} G\Big(\cR_n\setminus \cR_n( At^{1/d},\rho)-x\Big) > \epsilon t^{2/d}\right) \le C\exp(-Kt^{1-\frac 2d}). 
\end{equation} 
One can moreover choose $\rho$ and $A$, such that $\frac{\log \epsilon}{\log \rho}$ and $\frac{\log A}{\log(1/\epsilon)}$ remain bounded as $\epsilon \to 0$. 
\end{proposition}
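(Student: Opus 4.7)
The plan is to prove both bounds via a multi-scale density decomposition combined with Theorem \ref{theo-folding} at each dyadic scale $r_k=2^k$. Splitting into dyadic annuli around $x$ and using $G(z)\le C/\|z\|^{d-2}$ gives the elementary inequality
\[
G(\Lambda-x) \;\le\; C\sum_{k\ge 0} r_k^{2-d}\,|\Lambda\cap Q(x,r_k)|
\qquad\text{for any } \Lambda\subseteq\Z^d,
\]
so it suffices to bound the per-scale counts.

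For \eqref{borne.simple} I would take $\Lambda=\cR_n$. Scales $r_k\le t^{1/d}$ contribute $\sum r_k^2\le Ct^{2/d}$ via the trivial bound $|\cR_n\cap Q(x,r_k)|\le r_k^d$. For intermediate scales $t^{1/d}\le r_k\le \sqrt{n}\log n$, I would apply Theorem \ref{theo-folding} at scale $2r_k$ with density $\rho_k:=C_0\beta\,t^{1-2/d}/r_k^{d-2}$ (so that $\rho_k(2r_k)^{d-2}\ge C_0\log n$) and threshold $L_k:=c'\,t^{1-2/d}r_k^2$. A direct computation shows that $\kappa\rho_k^{2/d}L_k^{1-2/d}$ simplifies to a positive constant multiple of $t^{1-2/d}$, \emph{independently of $k$}: this algebraic cancellation is the heart of the argument, letting the per-scale probability $\exp(-c\,t^{1-2/d})$ survive the union bound over the $O(t^{1-2/d})$ intermediate dyadic scales. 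On the resulting good event, a cube-covering argument (distinguishing whether $Q(x,r_k)\cap\cR_n$ contains a point outside $\cR_n(2r_k,\rho_k)$ or consists entirely of such points) yields $|\cR_n\cap Q(x,r_k)|\le\max\{L_k,\,2^d\rho_k r_k^d\}\le C\,t^{1-2/d}r_k^2$, since both candidates in the max are of the same order. Substituting produces the geometric series $\sum_k C\,t^{1-2/d}r_k^{4-d}$ of value $O(t^{2/d})$ for $d\ge 5$; scales $r_k>\sqrt{n}\log n$ contribute only $O(n^{(4-d)/2})=O(1)$ via $|\cR_n|\le n$ and the standard control of the walk's diameter.

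For \eqref{borne.simple.2} I would take $\Lambda=\cR_n\setminus\cR_n(At^{1/d},\rho)$ and split at the threshold scale $r_0:=At^{1/d}$. For $r_k\le r_0$, the definition of $\cR_n(r_0,\rho)$ together with a cube covering gives $|\Lambda\cap Q(x,r_k)|\le\min(r_k^d,\,2^d\rho r_0^d)$; splitting the dyadic sum at the crossover $r^{**}:=(2^d\rho)^{1/d}r_0$ bounds the total contribution from these scales by $C\rho^{2/d}A^2 t^{2/d}$. For $r_k>r_0$ I would apply the same folding-based argument as in \eqref{borne.simple} but with $L_k:=c''K^{d/(d-2)}t^{1-2/d}r_k^2$, so that the per-scale probability tightens to $\exp(-Kt^{1-2/d})$; the geometric tail then contributes $CK^{d/(d-2)}t^{2/d}/A^{d-4}$. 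Choosing $A\asymp(K^{d/(d-2)}/\epsilon)^{1/(d-4)}$ and $\rho\asymp(\epsilon/A^2)^{d/2}\asymp\epsilon^{d(d-2)/(2(d-4))}$ makes both contributions at most $\epsilon t^{2/d}/2$. Crucially, we do \emph{not} apply Theorem \ref{theo-folding} at scale $r_0$ itself, so $\rho$ need not satisfy \eqref{cond-folding} and may be taken arbitrarily small; this yields $\log A/\log(1/\epsilon)\to 1/(d-4)$ and $\log\epsilon/\log\rho\to 2(d-4)/(d(d-2))$ as $\epsilon\to 0$, both bounded.

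The main obstacle I anticipate is identifying the algebraic cancellation $\rho_k^{2/d}L_k^{1-2/d}\asymp t^{1-2/d}$ (uniform in $r_k$), on which the whole scheme rests: without it the probability budget could not absorb the union bound over $O(\log n)=O(t^{1-2/d})$ dyadic scales. A secondary subtlety is that for \eqref{borne.simple.2} one should \emph{not} use folding below $r_0$, since the folding-based bound would only give a $K$-dependent contribution of order $t^{2/d}$ there; the $D_0$-removal density bound is strictly better and is what produces the $\epsilon$-shrinkage, through the crucial $\rho^{2/d}A^2$ factor.
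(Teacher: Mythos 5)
Your proposal is correct and rests on the same fundamental mechanism as the paper's proof: a dyadic multi-scale decomposition, the application of Theorem~\ref{theo-folding} with the coupled choice $\rho_k r_k^{d-2}\asymp\log n$, and the exact algebraic cancellation $\rho_k^{2/d}L_k^{1-2/d}\asymp t^{1-2/d}$ (uniform in the scale) that lets a union bound over the $O(\log n)$ scales survive. That cancellation is precisely what the paper exploits when it defines the good event $\mathcal E_L=\{|\Lambda_i|\le \rho_i^{-2/(d-2)}Lt\}$ and observes $\rho_i^{2/d}(\rho_i^{-2/(d-2)}Lt)^{1-2/d}=(Lt)^{1-2/d}$. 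Where you diverge is in the packaging: the paper first partitions $\cR_n$ into the level sets $\Lambda_i=\cR_n(r_i,\rho_i)\setminus\bigcup_{j<i}\cR_n(r_j,\rho_j)$, controls $|\Lambda_i|$ on $\mathcal E_L$, and then estimates $G(\cR_n-x)$ via a shell decomposition together with Lemma~\ref{lem.tps.passe}; you instead bound the per-scale cube counts $|\cR_n\cap Q(x,r_k)|$ directly, via the dichotomy (point outside $\cR_n(2r_k,\rho_k)$ in the cube, versus cube entirely inside $\cR_n(2r_k,\rho_k)$), and plug them into the elementary bound $G(\Lambda-x)\le C\sum_k r_k^{2-d}|\Lambda\cap Q(x,r_k)|$, avoiding both the $\Lambda_i$-decomposition and Lemma~\ref{lem.tps.passe}. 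Your route is somewhat more direct; the paper's $\Lambda_i$ formulation is a general-purpose tool reused elsewhere. For \eqref{borne.simple.2} the paper similarly removes a union $\bigcup_{i\le I}\cR_n(r_i,\delta\rho_i)$ and then observes this set is contained in some $\cR_n(At^{1/d},\rho)$, which is in effect your splitting at the threshold scale $r_0=At^{1/d}$. Your explicit tracking of the dependence $A\asymp(K^{d/(d-2)}/\epsilon)^{1/(d-4)}$ and $\rho\asymp(\epsilon/A^2)^{d/2}$, and the verification that $\log A/\log(1/\epsilon)$ and $\log\epsilon/\log\rho$ remain bounded, correctly delivers the final assertion.
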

The third step is to deduce that most of the intersection occurs in a finite number of boxes. 
That is we prove Proposition \ref{prop.localtime}, as well as its analogue for the mutual intersection, which we state as a separate proposition: 

\begin{proposition}\label{prop.finiteballs}
There exists two positive constants $c_1$ and $c_2$, such that for any $t>0$, 
\begin{equation}\label{bornes.intersections}
\exp(-c_1t^{1-\frac 2d}) \le \bP(|\cR_\infty\cap \tilde \cR_\infty| > t) \le \exp(-c_2t^{1-\frac 2d}). 
\end{equation}
Furthermore, for any $\epsilon>0$, there exists an integer $N=N(\epsilon)$, such that 
\begin{equation}\label{scenario.finiteballs}
\lim_{t\to \infty} \bP\left(\exists x_1,\dots,x_N\in \Z^d\, :\, |\cR_\infty \cap \tilde \cR_\infty\cap \Big(\bigcup_{i=1}^N Q(x_i,t^{1/d})\Big)|> (1-\epsilon)t\mid |\tilde \cR_\infty \cap \cR_\infty| > t\right)= 1. 
\end{equation} 
One can moreover choose $N(\epsilon)$, such that $\frac{\log N(\epsilon)}{\log (1/\epsilon)}$ remains bounded as $\epsilon\to 0$.  
\end{proposition}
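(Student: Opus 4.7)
The plan is to establish the tail bounds \eqref{bornes.intersections} and the concentration \eqref{scenario.finiteballs} together, using Proposition \ref{prop.main} and the tools of the previous section.

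\textbf{Finite time reduction and lower bound.} I would first reduce to the finite horizon $n:= \exp(\beta t^{1-2/d})$ via \eqref{finite.time}, choosing $\beta$ large enough that $\bP(\cR[n,\infty)\cap \widetilde \cR_\infty\neq \emptyset)\le C\exp(-\beta(d-4)t^{1-2/d}/2)$ is negligible compared to $\exp(-c_2t^{1-2/d})$ (and symmetrically after swapping the two walks). The lower bound in \eqref{bornes.intersections} is then immediate from \eqref{BBH.Phet}: for any fixed $b>0$, $\bP(|\cR_\infty\cap\widetilde\cR_\infty|>t)\ge \bP(|\cR_{bt}\cap\widetilde\cR_{bt}|>t)\ge \exp(-(\mathcal I(b)+o(1))t^{1-2/d})$.

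\textbf{Upper bound via Lemma \ref{lem.tail}.} I apply the first bound \eqref{borne.simple} of Proposition \ref{prop.main}: with probability at least $1-C\exp(-ct^{1-2/d})$, $\sup_x G(\cR_n-x)\le Ct^{2/d}$. On this event, Lemma \ref{lem.tail} applied to the independent local time $\widetilde\ell_\infty$, conditional on $\cR_n$, yields $\bP(\widetilde\ell_\infty(\cR_n)>t \mid \cR_n)\le 2\exp(-c't^{1-2/d})$. Since every site in $\cR_n\cap\widetilde\cR_n$ contributes at least $1$ to $\widetilde\ell_\infty(\cR_n)$, we have $|\cR_n\cap\widetilde\cR_n|\le \widetilde\ell_\infty(\cR_n)$, yielding the upper bound of \eqref{bornes.intersections}.

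\textbf{Localization in $\cH$.} For \eqref{scenario.finiteballs} I apply the second bound \eqref{borne.simple.2} of Proposition \ref{prop.main} with $\varepsilon':=\varepsilon/10$ and $K$ large, and set $\cH:=\cR_n(At^{1/d},\rho)$. With probability $\ge 1-C\exp(-Kt^{1-2/d})$, $\sup_x G(\cR_n\setminus\cH-x)\le \varepsilon't^{2/d}$. Lemma \ref{lem.tail} then gives $\widetilde\ell_\infty(\cR_n\setminus\cH)\le \varepsilon't$ with probability $\ge 1-2\exp(-ct^{1-2/d})$, so $|\cR_\infty\cap\widetilde\cR_\infty\setminus\cH|\le \varepsilon't$ up to the negligible \eqref{finite.time}-error; thus the intersection concentrates (up to $O(\varepsilon)t$) in $\cH$. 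It remains to cover $\cH$ by $N(\varepsilon)$ boxes of side $t^{1/d}$ with $\log N(\varepsilon)/\log(1/\varepsilon)$ bounded.

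\textbf{Covering $\cH$ --- the main obstacle.} I pick a maximal $4At^{1/d}$-separated subset $\mathcal C\subseteq\cH$; each $c\in\mathcal C$ satisfies $\ell_\infty(Q(c,At^{1/d}))\ge \rho(At^{1/d})^d$. For any fixed such $\mathcal C$ of size $N_0$, Theorem \ref{theo.finitecovering} combined with $\capa(Q_{At^{1/d}}(\mathcal C))\ge c(At^{1/d})^{d-2}N_0^{1-2/d}$ gives a probability $\le C\exp(-c\rho A^{d-2}N_0^{1-2/d}t^{1-2/d})$. Restricting to the (doubly exponentially likely) event $\cR_n\subseteq B(0,n^{1/2+\delta})$, the number of candidate $\mathcal C$ is at most $\exp(C'N_0\beta t^{1-2/d})$ for some $C'$ depending only on $d$, so the union bound yields $\bP(|\mathcal C|\ge N_0)\le \exp(t^{1-2/d}(C'N_0\beta-c\rho A^{d-2}N_0^{1-2/d}))$. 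Choosing $N_0:=(c\rho A^{d-2}/(2C'\beta))^{d/2}$ makes this $\le \exp(-c''t^{1-2/d})$. The last assertion of Proposition \ref{prop.main} gives $\rho\gtrsim\varepsilon^{C_\rho}$ and $A\lesssim\varepsilon^{-C_A}$, whence $N_0\le \varepsilon^{-C}$ for some $C$. Since maximality yields $\cH\subseteq \bigcup_{c\in\mathcal C}Q(c,8At^{1/d})$ and each such box is a union of $(8A)^d$ boxes of side $t^{1/d}$, $\cH$ is covered by $N(\varepsilon):=N_0(8A)^d\le \varepsilon^{-C'}$ such boxes, proving \eqref{scenario.finiteballs}. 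The critical feature that makes the union bound succeed is the $N^{1-2/d}$ scaling in the capacity lower bound (enabled by the removal of the $L$-dependence in Theorem \ref{theo-folding}/\ref{theo.finitecovering} relative to earlier versions), which beats the exponential combinatorial factor for $N_0$ polynomial in $1/\varepsilon$.
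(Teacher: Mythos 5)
Your reduction to finite time, your upper bound in \eqref{bornes.intersections} via Lemma~\ref{lem.tail} and \eqref{borne.simple}, and your use of \eqref{borne.simple.2} to localize the intersection in $\cH:=\cR_n(At^{1/d},\rho)$ are all in line with the paper. The lower bound is also fine, though the paper deliberately uses Proposition~4.1 of~\cite{AS17} instead of \eqref{BBH.Phet} precisely so that Propositions~\ref{prop.localtime} and~\ref{prop.finiteballs} remain independent of~\cite{BBH04}.

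The covering step, however, has a genuine gap. You cover $\cH$ itself by a $4At^{1/d}$-separated set $\C$, apply Theorem~\ref{theo.finitecovering} for a \emph{fixed} $\C$, and then union-bound over all candidate $\C$ inside $B(0,n^{1/2+\delta})$. The combinatorial factor is $\exp\!\big(C'N_0\beta\, t^{1-2/d}\big)$, \emph{linear} in $N_0$ in the exponent, while the per-set decay is $\exp\!\big(-\kappa c\,\rho A^{d-2}N_0^{1-2/d}t^{1-2/d}\big)$, \emph{sublinear} in $N_0$. The net exponent $\kappa c\,\rho A^{d-2}N_0^{1-2/d}-C'\beta N_0$ is maximized over $N_0$ at a value of order $(\rho A^{d-2})^{d/2}/\beta^{(d-2)/2}$. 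From the construction in the proof of \eqref{borne.simple.2} (with $\delta\sim\epsilon^{d/2}$ and $2^I\sim\epsilon^{-(d-2)/(d-4)}$), one finds $\rho A^{d-2}\sim\delta\beta\,2^{-2I/(d-2)}\to 0$ as $\epsilon\to 0$. Hence the best achievable decay rate $c''$ in your union bound also tends to $0$, and in particular cannot be kept above the constant $c_1$ from the lower bound. Your choice $N_0=(c\rho A^{d-2}/(2C'\beta))^{d/2}$ already reflects this: it becomes smaller than $1$ for small $\epsilon$, and even when $N_0\ge 1$ there is no way to simultaneously beat the combinatorics and make the bound negligible relative to $\exp(-c_1t^{1-2/d})$.

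The missing idea is the one encapsulated in Proposition~\ref{cor.theo.covering}: one should cover $\cH\cap\widetilde\cR_\infty$ rather than $\cH$, i.e.\ insist that \emph{each box is also visited by the other walk}. The extra factor $\prod_{x\in\C}\bP(\widetilde\cR_\infty\cap Q(x,r)\neq\emptyset)$, raised to a power $q\in(d/(d-2),2)$ via H\"older and controlled by \eqref{bound.hit} and summability of $G^q$, makes the sum over all positions of $\C$ converge \emph{without any truncation}, at a cost of only $C^{2N}(N!)^q$ — a constant in $t$ — instead of $\exp(C'N\beta t^{1-2/d})$. This is precisely what lets one take $N$ as large as needed to beat $\rho A^{d-2}N^{1-2/d}>c_1$ even when $\rho A^{d-2}$ is small. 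Without this device your union bound cannot close, so the concentration \eqref{scenario.finiteballs} (and the polynomial bound on $N(\epsilon)$) does not follow from your argument as written.
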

The lower bound in \eqref{bornes.intersections} follows of course from \eqref{BBH.Phet}, but for the sake of completeness, we provide another independent argument based on \cite{AS17}, 
which makes the proof of Propositions \ref{prop.localtime} and \ref{prop.finiteballs} independent of \cite{BBH04}. The upper bound in \eqref{bornes.intersections} on the other hand simply follows from  Lemma \ref{lem.tail}, together with \eqref{finite.time} and \eqref{borne.simple}. Now concerning \eqref{scenario.finiteballs}, note that it 
would follow as well from \eqref{finite.time}, \eqref{borne.simple.2}  
and Lemma \ref{lem.tail}, if we could combine it with Theorem \ref{theo-folding}, since we just need to show that the set $\cR_n(A t^{1/d},\rho)$ can be covered by a finite number of cubes. 
This would be fine indeed, if we could choose the constants $A$ and $\rho$ given by \eqref{borne.simple.2} as large 
as wanted, so to satisfy the condition \eqref{cond-folding}. However, since in fact they may be small, we use instead Theorem \ref{theo.finitecovering}.

The rest of the proof relies on the results of \cite{BBH04, Phet12}, and \eqref{BBH.Phet}. We first reduce the region where most of the intersection occurs, from an arbitrary finite number of boxes to a unique, possibly enlarged, one; 
in other words we prove Theorem \ref{theo.scenario}. This part is based on the concavity of the map $t\mapsto t^{1-2/d}$, which implies that distributing the total intersection $t$ 
on more than one box increases the cost of the deviations. Note that it is crucial here to know the exact constant in the exponential, which is why we need \eqref{BBH.Phet}. 
We also use some surgery on the trajectories of the two walks; that is first a decomposition into excursions between the various boxes, 
and then a cutting/gluing argument to ensure that intersections inside each box occur in time-windows of order $t$, so to make \eqref{BBH.Phet} applicable. 
Finally, the same operation of surgery allows also to deduce Theorem \ref{theo.main} from Theorem \ref{theo.scenario} and \eqref{BBH.Phet}.

Now the end of the proof is organized as follows. We first prove Proposition \ref{prop.main} in Section \ref{sec.mainprop}. We then prove Propositions \ref{prop.localtime} and \ref{prop.finiteballs} in Section \ref{sec.propfiniteballs}, and finally we conclude the proofs of Theorems \ref{theo.main} and \ref{theo.scenario} in Section \ref{sec.proofstheo}. 


\section{Proof of Proposition \ref{prop.main}}\label{sec.mainprop}
We first introduce a decomposition of the range into subsets according to the occupation density of their neighborhoods, at different scales and bound the cardinality of each subset using Theorem \ref{theo-folding}. Then we prove \eqref{borne.simple} and \eqref{borne.simple.2} separately in Subsections \ref{subsec.borne1} and \ref{subsec.borne2} respectively. 

\subsection{Multi-scale decomposition of the range}
Our approach relies on a simple multi-scale analysis of the occupation
densities, on which space and density are scaled together. 
More precisely we introduce a sequence of densities $\{\rho_i\}_{i\ge 0}$ and associated space-scales $\{r_i\}_{i\ge 0}$ defined respectively, for any integer $i\ge 0$, by 
\begin{equation}\label{rhoi.ri}
\rho_i:=2^{-i}, \quad \text{and}\quad  \rho _i\cdot  r_i^{d-2}= C_0\log n, 
\end{equation} 
with $C_0$ the constant appearing in \eqref{cond-folding}.

It might be that on small scales, say $r_j$ for $j<i$,
the density around some point of the range {\it remains small},
whereas it overcomes $\rho_i$ at scale $r_i$. To encapsulate
this idea we define for $i\ge 1$ (recall \eqref{def.Rnrho} and note that by definition $\cR_n(r_0,\rho_0)$ is empty), 
\begin{equation}\label{def.Li}
\Lambda_i:=\cR_n(r_i,\rho_i)\bs \left(\bigcup_{1\le j<i}  \cR_n(r_j,\rho_j)\right), \quad \text{and}\quad \Lambda_i^*=\cR_n 
\bs \left(\bigcup_{1\le j<i}  \cR_n(r_j,\rho_j)\right).
\end{equation}
When dealing with these sets  
we will use two facts: on one hand for each $i\ge 1$, $\Lambda_i$ is a subset of $\cR_n(r_i,\rho_i)$, and thus Theorem \ref{theo-folding} will provide some control on its volume. 
On the other hand, using that $\Lambda_i^*\subseteq \cR_n(r_{i-1},\rho_{i-1})^c$, and by cutting a box into $2^d$ disjoint sub-boxes of side-length twice smaller, we can see that  
\begin{equation}\label{density.rj}
|\Lambda_i^*\cap Q(z,r_{i-1})| \le 2^d\rho_{i-1} r_{i-1}^d, \quad \text{for all }z\in \Z^d, \text{ and all } i>0. 
\end{equation}
Note also that since $\Lambda_i\subseteq \Lambda_i^*$, the same bounds hold for $\Lambda_i$.

By Theorem~\ref{theo-folding}, we have for some constant $\kappa>0$, for any $\lambda>0$, and any $i\ge 1$, 
\begin{equation}\label{proof-3}
\bP\big( |\Lambda_i|> \lambda\big)\le \exp(-\kappa \rho_i^{2/d}\cdot \lambda^{1-2/d}).
\end{equation}
Note also that since $|\cR_n|\le n+1$, the set $\Lambda_i$ is empty when $\rho_i r_i^d>n+1$, or equivalently when $C_0r_i^2 \log n>n+1$. In particular, for $n$ large enough, 
\begin{equation}\label{emptyset}
\Lambda_i=\emptyset, \quad \text{for all }i> (d-2)\log_2(n).
\end{equation} 
Now for $L>0$, define the good event: 
\begin{equation*}
\mathcal E_L:= \left\{|\Lambda_i|\le \rho_i^{-\frac{2}{d-2}} \cdot L  t,\quad \text{for all }i\ge 1 \right\}.
\end{equation*}
Then \eqref{proof-3} and \eqref{emptyset} show that for some constant $C>0$, 
\begin{equation}\label{prob.E}
\bP(\mathcal E_L^c) \le C\log_2(n) \exp(-\kappa (L t)^{1-\frac 2d}) \le C\exp(-\frac{\kappa}{2}\cdot (L t)^{1-\frac 2d}). 
\end{equation}

Now to motivate the definition of the sets $\Lambda_i$, and as a warmup for future computations in the next subsections, 
let us bound $\sup_{x\in \Lambda_i} G(\Lambda_i-x)$. We first write for $x\in \Lambda_i$, 
$$G(\Lambda_i-x) = G((\Lambda_i-x) \cap Q(0,r_{i-1}))  + G((\Lambda_i -x) \cap Q(0,r_{i-1})^c).$$ 
We then use Lemma \ref{lem.tps.passe} to bound the second term. This yields  
$$G((\Lambda_i -x) \cap Q(0,r_{i-1})^c)\le C \rho_{i-1}^{1-\frac 2d}\cdot |\Lambda_i|^{2/d}.$$
For the first term we use that by definition $|\Lambda_i\cap Q(x,r_j)|\le \rho_j r_j^d$, for all $j<i$. This yields 
\begin{align*}
G((\Lambda_i-x)\cap Q(0,r_{i-1}) ) & = G((\Lambda_i-x) \cap Q(0,r_0)) + \sum_{j=1}^{i-1}G((\Lambda_i-x)\cap Q(0,r_j)\bs Q(0,r_{j-1}) )\\
& \stackrel{\eqref{Green}}{\le} G(Q(0,r_0)) +C \sum_{j=1}^{i-1} \frac {\rho_j r_j^d}{r_{j-1}^{d-2}}  \stackrel{\text{Lemma } \ref{lem.sumGreen}}{\le} C\left\{r_0^2 +   \sum_{j=1}^{i-1} \frac { \log n}{r_j^{d-4}}\right\} \\
& \le C\left\{r_0^2 +  \frac {\log n}{r_0^{d-4}} \right\}\le C(\log n)^{\frac{2}{d-2}} 
\le Ct^{2/d},
\end{align*}
with a constant $C$ that is independent of $x\in \Lambda_i$. Altogether this gives 
$$\sup_{x\in \Lambda_i} G(\Lambda_i - x) \le C(t^{2/d} +\rho_{i-1}^{1-\frac 2d}|\Lambda_i|^{2/d}). $$ 
Now on the events $\mathcal E_L$,we get a bound $Ct^{2/d}$, for another constant $C$ that only depends on $L$. These bounds are however not sufficient to prove 
the results we need, since there are order $\log n$ sets $\Lambda_i$ to consider, but the idea of the proofs in the next two subsections will be similar.   

\subsection{Proof of \eqref{borne.simple}}\label{subsec.borne1}  
We claim that for some constant $C>0$, it holds 
\begin{equation}\label{goal.Grange}
\mathcal E_1\subseteq \{\sup_{x\in \Z^d} G(\cR_n-x) \le Ct^{2/d}\}.
\end{equation}
By \eqref{prob.E}, this would imply the desired result, so let us prove \eqref{goal.Grange} now.

Assume that the event $\mathcal E_1$ holds, and let us bound $\sup_{x\in \Z^d} G(\cR_n-x)$.

We fix some $x$, and divide space into concentric shells as follows: for integers $k\ge 1$, set 
$$\mathcal S_k:= Q(x,r_k)\bs Q(x,r_{k-1}),$$
and $\mathcal S_0 = Q(x,r_0)$.   
Then we use additivity to write 
$$G(\cR_n-x) = \sum_{k\ge 0} G(\mathcal S_k \cap \cR_n).$$
By Lemma \ref{lem.sumGreen} and \eqref{Green}, one has on $\mathcal E_1$, 
\begin{equation}\label{borne.S_0}
G(\cR_n\cap \S_0) \le G(\S_0)\le Cr_0^2 \le C(\log n)^{\frac{2}{d-2}} \le Ct^{2/d},
\end{equation} 
with $C$ some positive constant, whose value might change from line to line. 
Furthermore, for any $k\ge 1$, recalling \eqref{def.Li}, 
\begin{equation*}
G(\S_k \cap \cR_n) = \sum_{j=1}^{k} G\big(\S_k \cap  \Lambda_j\big) + G(\S_k \cap \Lambda_{k+1}^*). 
\end{equation*}
By \eqref{Green} and \eqref{density.rj}, one has for any $k\ge 1$, 
$$G(\S_k \cap \Lambda_{k+1}^*)\le C\cdot \frac{|\S_k \cap \Lambda_{k+1}^*|}{r_{k-1}^{d-2}} \le C\cdot \frac{\rho_kr_k^d}{r_{k-1}^{d-2}} \le C\cdot \frac{\log n}{r_k^{d-4}},$$
using also \eqref{rhoi.ri} for the last inequality. Summing over $k$ gives 
$$\sum_{k\ge 1} G(\S_k \cap \Lambda_{k+1}^*) \le C\frac{\log n}{r_0^{d-4}} \le C (\log n)^{1-\frac{d-4}{d-2}} \le C(\log n)^{\frac 2{d-2}} \le Ct^{2/d}. $$
On the other hand by Lemma \ref{lem.tps.passe}, for any $j\ge 1$, on $\mathcal E_1$, 
$$\sum_{k\ge j} G(\S_k\cap \Lambda_j) = G(\Lambda_j\cap Q(x,r_{j-1})^c) 
\le C\rho_{j-1}^{1-\frac 2d} |\Lambda_j|^{2/d} \le C\rho_j^{1-\frac 2d(1+\frac 2{d-2}) } t^{2/d}\le C\rho_j^{\frac{d-4}{d-2}}t^{2/d}. $$ 
Summing over $j\ge 1$, gives 
$$\sum_{j\ge 1} \sum_{k\ge j} G(\S_k\cap \Lambda_j) \le Ct^{2/d},$$
which concludes the proof of \eqref{goal.Grange}, and \eqref{borne.simple}.

\subsection{Proof of \eqref{borne.simple.2}} \label{subsec.borne2}
Let us give some $\epsilon$ and $K$, and then fix $L$ such that $\bP(\mathcal E_L^c) \le C\exp(-Kt^{1-2/d})$, which is always possible by \eqref{prob.E}.

Next, for $\delta>0$, and $I$ some integer, define 
$$\cR_n(I,\delta) := \bigcup_{i\le I} \cR_n(r_i,\delta \rho_i).$$
We claim that one can find $\delta\in (0,1)$ and $I\ge 0$, such that 
\begin{equation}\label{goal.2}
\mathcal E_L \subseteq \{\sup_{x\in \Z^d} G(\cR_n\bs \cR_n(I,\delta) - x) \le \epsilon t^{2/d}\}. 
\end{equation} 
This would conclude the proof, since for any fixed $I$ and $\delta$, one can find $A$ and $\rho$, such that 
$$\cR_n(I,\delta) \subseteq \cR_n(At^{1/d}, \rho). $$ 
So let us prove \eqref{goal.2} now. Fix some $x\in \Z^d$, and consider the decomposition of space into concentric shells $(\mathcal S_k)_{k\ge 0}$, as in the previous subsection. By Lemma \ref{lem.sumGreen}, one has 
$$G((\cR_n\bs \cR_n(I,\delta) ) \cap \S_0) \le C\delta^{2/d} r_0^2\le C\delta^{2/d} t^{2/d},$$
and for any $1\le k\le I$, by \eqref{Green}, 
$$G((\cR_n\bs \cR_n(I,\delta) ) \cap \S_k)\le \frac{C\delta \rho_k r_k^d}{r_{k-1}^{d-2}} \le C\delta \frac{\log n}{r_k^{d-4}}.$$
Thus, summing over $k\le I$, yields 
$$\sum_{1\le k\le I}  G((\cR_n\bs \cR_n(I,\delta) ) \cap \S_k)\le C\delta \frac{\log n}{r_0^{d-4}} \le C\delta t^{2/d}. $$ 
On the other hand, since for any $\delta \le 1$, $\cup_{i\le I} \Lambda_i \subseteq \cR_n(I,\delta)$, 
one has for any $k> I$, 
$$G((\cR_n\bs \cR_n(I,\delta) ) \cap \S_k) \le \sum_{j=I+1}^k G(\Lambda_j\cap \S_k) + G(\Lambda_{k+1}^*\cap \S_k),$$
and the same bounds as in the previous subsection give on $\mathcal E_L$,
$$ \sum_{k\ge I+1} G((\cR_n\bs \cR_n(I,\delta) ) \cap \S_k) \le C t^{2/d} \sum_{j\ge I+1} \rho_j^{\frac{d-4}{d-2}} +C\sum_{k\ge I+1}\frac{\log n}{r_k^{d-4}} \le C\rho_I^{\frac {d-4}{d-2}} t^{2/d}.$$
Altogether, we see that by choosing $I$ large enough, and $\delta$ small enough, we get \eqref{goal.2}, concluding the proof of \eqref{borne.simple.2}. 
Finally the fact that $1/\rho$ and $A$ can be chosen, so that they grow at most polynomially in $1/\epsilon$ is by construction.


\section{Proof of Propositions \ref{prop.localtime} and \ref{prop.finiteballs}} \label{sec.propfiniteballs}
\subsection{Proof of \eqref{bornes.localtime} and \eqref{bornes.intersections}.} 
We start with the lower bounds. Note that it suffices to do it for the intersection of two ranges, 
that is for \eqref{bornes.intersections}, and for a finite time horizon. For this we use Proposition 4.1 from \cite{AS17}, which entails the following fact:   

\begin{proposition}[\cite{AS17}] \label{prop-as17} Assume $d\ge 3$. There are positive constants  
$\rho$, $\kappa$ and $C$, such that for $n$ large enough, for 
 any subset $\Lambda \subseteq Q(0,n^{1/d})$, with $|\Lambda|>C$, one has 
\begin{equation*}
\bP(|\cR_n\cap \Lambda |> \rho |\Lambda|) \ge 
\exp(-\kappa \cdot n^{1-2/d}).
\end{equation*}
\end{proposition}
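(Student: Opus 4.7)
The natural strategy is a two-step confinement-and-equidistribution argument. Set $L:=\lceil n^{1/d}\rceil$ and let $B:=Q(0,C_1 L)$ with $C_1>1$ large enough that $Q(0,L)$ sits well inside $B$. Let $A$ be the event that $S_k\in B$ for every $0\le k\le n$. By the standard Dirichlet eigenvalue estimate for the simple random walk on $B$ (with principal eigenvalue $\lambda_1(B)\asymp L^{-2}$ and a positive lower bound on the principal eigenfunction $\phi_1$ at $0$), one has $\bP(A)\ge \exp(-\kappa_1 n^{1-2/d})$. The rest of the proof then aims to show that, conditional on $A$, the walk visits a positive fraction of $\Lambda$ with probability bounded below by a universal constant.

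For this, I would use that for $k$ in the bulk of $[0,n]$ the conditional law of $S_k$ given $A$ is well approximated by the quasi-stationary distribution supported on $B$, which is close to uniform on $Q(0,L)$ since $\phi_1$ is a product of sines up to lower-order corrections and is therefore bounded above and below by constant multiples on $Q(0,L)$. This yields the first-moment bound $\bE[\ell_n(x)\mid A]\ge c_1>0$ for every $x\in Q(0,L)$, the scale separation $n\gg L^2$ providing a macroscopic time window during which the conditioned walk essentially samples the QSD. A crude second-moment estimate based on the strong Markov property and the Dirichlet Green's function on $B$ gives $\bE[\ell_n(x)^2\mid A]\le C\,\bE[\ell_n(x)\mid A]$, so Paley--Zygmund upgrades this to a uniform lower bound $\bP(x\in\cR_n\mid A)\ge c_0>0$ for every $x\in Q(0,L)$. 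Summing over $x\in\Lambda\subseteq Q(0,L)$ gives $\bE[|\cR_n\cap\Lambda|\mid A]\ge c_0|\Lambda|$, and the deterministic bound $|\cR_n\cap\Lambda|\le|\Lambda|$ yields $\bE[|\cR_n\cap\Lambda|^2\mid A]\le|\Lambda|\,\bE[|\cR_n\cap\Lambda|\mid A]$. A second application of Paley--Zygmund then produces $\bP(|\cR_n\cap\Lambda|\ge(c_0/2)|\Lambda|\mid A)\ge c_0/4$. Combining with the lower bound on $\bP(A)$ and setting $\rho:=c_0/2$ gives the statement, with the hypothesis $|\Lambda|>C$ absorbing multiplicative errors.

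The main obstacle is the quantitative equidistribution inside $B$: one needs a two-sided estimate on $\phi_1(x)$ uniform for $x\in Q(0,L)$ and good control on the ratios $\bP(S_k=x\mid A)/\bP(A)$ for $k$ in the bulk, so that the soft statement that $\phi_1$ is approximately uniform on $Q(0,L)$ actually turns into the uniform hit-probability bound $\bP(x\in\cR_n\mid A)\ge c_0$. The role of $C_1>1$ is precisely to keep $Q(0,L)$ away from $\partial B$ so that the sine-product lower bound on $\phi_1$ applies; once this is secured, the spectral gap $\lambda_2-\lambda_1\asymp L^{-2}$ ensures convergence to the QSD on a time scale of order $L^2 \ll n$, and the remaining moment and Paley--Zygmund steps are essentially automatic.
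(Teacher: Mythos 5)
The paper does not actually re-prove this proposition: it is quoted from \cite{AS17} (Proposition 4.1 there, stated for $d=3$), with the remark that the proof applies mutatis mutandis in higher dimension. So there is no ``paper's own proof'' to compare against line by line. That said, the confinement-and-equidistribution route you sketch is the natural one and is, in spirit, the strategy underlying the AS17 lower bound: pay $\exp(-c\,n/L^2)=\exp(-c\,n^{1-2/d})$ to keep the walk in a box $B$ of side order $L:=n^{1/d}$, then argue that the conditioned walk is essentially uniformly spread over $B$ so that it visits a positive fraction of any prescribed $\Lambda\subseteq Q(0,L)$. Your two-layer Paley--Zygmund scheme (first at the level of a single site, then at the level of $|\cR_n\cap\Lambda|$ using the deterministic bound $|\cR_n\cap\Lambda|\le|\Lambda|$) is a clean way to conclude from the first and second conditional moments.

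A few remarks on the places where you are glossing. The conditional second-moment bound $\bE[\ell_n(x)^2\mid A]\le C\,\bE[\ell_n(x)\mid A]$ is the real content: one must show that the $\phi_1$-conditioned walk (the Doob $h$-transform by the principal Dirichlet eigenfunction of $B$) visits a fixed interior site a bounded expected number of times in time $n\asymp|B|$, uniformly over the site. This does follow from $G_B(x,x)=O(1)$ for $d\ge 3$ together with the fact that the stationary measure of the $h$-transformed chain is $\propto\phi_1^2\asymp|B|^{-1}$ on the bulk, so the expected return time to $x$ is $\asymp|B|\asymp n$; but it is exactly the ``quantitative equidistribution'' you flag as the main obstacle, and it should be spelled out rather than asserted. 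Second, simple random walk is $2$-periodic, so $\bP(S_k=x\mid A)=0$ for half the times $k$; this is harmless but you should either pass to the lazy walk, pair consecutive times, or track the two leading eigenvalues $\pm(1-\lambda_1)$ of the killed kernel before writing $\bP(S_k=x\mid A)\approx\phi_1(x)^2$. Finally, a small inconsistency: as written, your argument never uses the hypothesis $|\Lambda|>C$ (your final Paley--Zygmund step gives $\bP(|\cR_n\cap\Lambda|\ge(c_0/2)|\Lambda|\mid A)\ge c_0/4$ for any nonempty $\Lambda$). That hypothesis is present in the statement presumably because of bookkeeping in the AS17 proof; in your version it is vestigial, which is not a bug but worth noticing.
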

Note that Proposition 4.1 in \cite{AS17} is stated for dimension $3$ only, but its proof applies mutatis mutandis in higher dimension.

Now for $\alpha=1/\rho^2$ we force, at a cost given by Proposition~\ref{prop-as17},
the range $\tilde \cR_{\alpha t}$
to cover a fraction $\rho$ of $Q(0,r)$ with $r=(\alpha t)^{1/d}$,
and in turn force $ \cR_{\alpha t}$ to cover a fraction $\rho$  
of $\tilde \cR_{\alpha t}\cap Q(0,r)$. Observe that one has the inclusion 
\[
\{|\tilde \cR_{\alpha t}\cap Q(0,r)|> \rho r^d\}\cap
\{|\cR_{\alpha t}\cap\tilde \cR_{\alpha t}\cap Q(0,r)|> \rho  
|\tilde \cR_{\alpha t}\cap Q(0,r)|\}\subseteq 
\{|\cR_{\alpha t}\cap\tilde \cR_{\alpha t}|> \rho^2 r^d=t\}, 
\]
which concludes the proof of the lower bounds.

Concerning the upper bounds, as was already mentioned, they simply follow from \eqref{finite.time}, \eqref{borne.simple} (say with $\beta=1$), 
together with Lemma \ref{lem.tail}.

\subsection{Proof of \eqref{finite.balls.localtime} and \eqref{scenario.finiteballs}}
We first state and prove a corollary of Theorem \ref{theo.finitecovering} (and the remark following it), which might be of general interest. 
Recall that $\mathcal X_r$ is the collection of finite subsets of $\Z^d$, whose points are at distance at least $r$ from each other, and for $N$ positive integer, let $\mathcal X_{r,N}$ be the subset of $\mathcal X_r$ formed by subsets of cardinality $N$.  

\begin{proposition}\label{cor.theo.covering}
Let $\{S_n\}_{n\ge 0}$ and $\{\tilde S_n\}_{n\ge 0}$ be two independent simple random walks
on $\Z^d$, $d\ge 5$. There exist positive constants $\kappa$ and $C$, such that for any integers $r$ and $N$, and any $\rho>0$, satisfying 
\begin{equation}\label{weak-hyp}
\rho r^{d-2}> CN^{2/d}\log N,
\end{equation}
one has 
\begin{equation}\label{borne.cor.covering}
\bP\left(\exists \mathcal C\in \mathcal X_{4r,N}  :  \ell_\infty(Q(x,r)) > \rho r^d,\,  
 \tilde \cR_\infty \cap Q(x,r) \neq \emptyset, \, \forall x\in \mathcal C\right)  \le C\exp(-\kappa\,  \rho r^{d-2}  N^{1-\frac 2d}).
\end{equation}
\end{proposition}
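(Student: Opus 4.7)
By independence of $S$ and $\tilde S$, for each fixed $\mathcal C\in \mathcal X_{4r,N}$ the two events
\[
E^1_{\mathcal C}:=\{\ell_\infty(Q(x,r))>\rho r^d\ \forall x\in \mathcal C\},\qquad E^2_{\mathcal C}:=\{\tilde \cR_\infty\cap Q(x,r)\neq \emptyset\ \forall x\in \mathcal C\}
\]
factorise, and by a union bound the target is reduced to $\sum_{\mathcal C\in \mathcal X_{4r,N}}\bP(E^1_{\mathcal C})\bP(E^2_{\mathcal C})$. The hypothesis \eqref{weak-hyp} implies $\rho r^{d-2}>C\log N$, so Theorem~\ref{theo.finitecovering} applies, and combined with the lower bound $\capa(Q_r(\mathcal C))\ge cr^{d-2}N^{1-2/d}$ recalled after it, one obtains $\bP(E^1_{\mathcal C})\le C\exp(-\kappa\rho\,\capa(Q_r(\mathcal C)))\le C\exp(-\kappa'\rho r^{d-2}N^{1-2/d})$.

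For the second factor I would decompose according to the order $\sigma\in \mathfrak S_N$ in which $\tilde S$ first enters the boxes of $\mathcal C$, and apply the strong Markov property together with the hitting bound \eqref{bound.hit} at each successive entrance time, which yields
\[
\bP(E^2_{\mathcal C})\le \sum_{\sigma\in \mathfrak S_N}\prod_{i=1}^N Cr^{d-2} G(x_{\sigma(i)}-x_{\sigma(i-1)}),\qquad x_{\sigma(0)}:=0.
\]
After exchanging the sums over $\mathcal C$ and $\sigma$, the double sum becomes a single sum over ordered $N$-tuples of $\Z^d$ weighted by the capacity exponential. The critical quantitative point is that \eqref{weak-hyp} is strictly stronger than what Theorem~\ref{theo.finitecovering} needs: it forces $\rho\,\capa(Q_r(\mathcal C))\ge cN\log N$, hence $\bP(E^1_{\mathcal C})\le C\,N^{-\kappa'' N}$, which should absorb the factorial coming from orderings and other polynomial-in-$N$ combinatorial factors.

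The main obstacle I anticipate is the divergence of $\sum_{y\in\Z^d}G(y)$, which makes the iterated summation of the Green's function product over all ordered tuples infinite and rules out any naive union bound. The remedy I would pursue is to stratify configurations by $\capa(Q_r(\mathcal C))$ (equivalently, by the spread of $\mathcal C$): by subadditivity $\capa(Q_r(\mathcal C))\le CNr^{d-2}$ while the lower bound is $cr^{d-2}N^{1-2/d}$, so the stratum of configurations with capacity in $[K,2K]$ carries an extra decay $\exp(-\kappa\rho K)$ that should beat the volumetric growth of the number of configurations of the corresponding spread. After stratification, the bulk of the sum localises to configurations near the minimal-capacity regime, where the boxes lie in a ball of radius $\sim rN^{1/d}$ and the Green-function sums are restricted to a bounded region and can be controlled by a standard volume argument.
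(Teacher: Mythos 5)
You correctly identify the structure of the argument (factorise $E^1_{\mathcal C}$ from $E^2_{\mathcal C}$ by independence, apply Theorem~\ref{theo.finitecovering} plus the capacity lower bound to $E^1_{\mathcal C}$, and bound $\bP(E^2_{\mathcal C})$ by a Green's-function product via \eqref{bound.hit}), and you have put your finger on exactly the right obstacle: $\sum_y G(y)$ diverges, so the raw union bound $\sum_{\mathcal C}\bP(E^1_{\mathcal C})\bP(E^2_{\mathcal C})\le(\sup_{\mathcal C}\bP(E^1_{\mathcal C}))\sum_{\mathcal C}\bP(E^2_{\mathcal C})$ fails. However, the remedy you propose has a genuine gap.

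The stratification heuristic --- that configurations with $\capa(Q_r(\mathcal C))$ close to the minimum $c r^{d-2}N^{1-2/d}$ must lie in a ball of radius $\sim rN^{1/d}$ --- is false. Capacity is far from being monotone in diameter: take $N/2$ boxes densely packed near the origin and another $N/2$ densely packed at arbitrary distance $R\to\infty$. Then $\capa(Q_r(\mathcal C))\to 2\cdot\capa$ of a single cluster $\sim 2^{2/d}\,c r^{d-2}N^{1-2/d}$, i.e.\ still within a bounded factor of the minimum, while the diameter is unbounded. So the stratum $\{\capa\in[K,2K]\}$ with $K$ near the minimum already contains configurations of arbitrarily large spread, and the ``volume argument restricted to a bounded region'' you invoke to sum the Green's-function products within a stratum does not apply. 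Without spatial confinement inside each stratum, the summability problem you identified simply reappears stratum by stratum.

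The paper's route around this is a different and cleaner device. One fixes an exponent $q\in\left(\tfrac{d}{d-2},2\right)$ and observes two things. First, since visiting a cube is implied by spending time $\rho r^d$ in it, and $S,\tilde S$ are i.i.d., one has $\bP(E^1_{\mathcal C})\le\bP(E^2_{\mathcal C})$, whence $\bP(E^1_{\mathcal C})\bP(E^2_{\mathcal C})\le\bP(E^1_{\mathcal C})^{2-q}\bP(E^2_{\mathcal C})^{q}$. This transfers a fraction of the decay from $E^1$ while putting the extra power $q$ on $E^2$. Second, with $q>\tfrac{d}{d-2}$ the $q$-th power of the Green's function is summable, $\sum_{z\in 2r\Z^d\setminus\{0\}}G^q(z)\le Cr^{q(2-d)}$, so after a H\"older step on the $N!$-term sum over orderings one gets $\sum_{\mathcal C}\bP^q(E^2_{\mathcal C})\le C^{2N}(N!)^q$. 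The factorial is then absorbed by $\exp\left(-\kappa(2-q)\rho r^{d-2}N^{1-2/d}\right)$ using the hypothesis \eqref{weak-hyp}. This avoids any geometric stratification and any need to relate capacity to diameter, which, as noted above, is exactly where your approach breaks down.
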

An important difference here with the statement of Theorem \ref{theo.finitecovering} is that the set $\C$ is not fixed in advance anymore, but this is compensated  by 
the fact that we impose to another independent walk to visit all the cubes centered at points of $\C$. 
\begin{proof}[Proof of Proposition \ref{cor.theo.covering}]
Note that by replacing $r$ by $2r$, $\rho$ by $\rho/2^d$, and $N$ by $\lceil N/2\rceil$ if necessary, one can consider only subsets $\mathcal \C$ whose points belong to $2r\Z^d\bs \{0\}$. 
Fix now such set $\C\in \mathcal X_{4r,N}$, and denote by $x_1,\dots,x_N$ its elements. Note first that for any $r$ and $\rho$ satisfying \eqref{weak-hyp}, with $C$ large enough, 
Theorem \ref{theo.finitecovering} (and the remark following it) yield for some constant $\kappa$, 
\begin{equation}\label{many-2}
\bP(\ell_\infty(Q(x,r)) > \rho r^d, \ \forall x\in \C) \le C\exp(-\kappa \rho r^{d-2} 
N^{1-\frac 2d}).
\end{equation}
On the other hand, by \eqref{bound.hit} one has  
\begin{equation}\label{many-4}
\bP\big(\cR_\infty \cap Q(x,r) \neq \emptyset,\ \forall x\in \C\big)\le  ( C r^{d-2})^N\cdot  G(x_1,\dots,x_N), 
\end{equation}
where, denoting by $ \mathfrak{S}_N$ the set of permutations 
of $\{1,\dots,N\}$, 
\begin{equation*}
G(x_1,\dots,x_N):=
\sum_{\sigma\in \mathfrak{S}_N}
G(x_{\sigma_1})\prod_{i=1}^{N-1}G\big(x_{\sigma_{i+1}}-x_{\sigma_i}\big).
\end{equation*}
For any $2>q>1$, using H\"older's inequality
\begin{equation}\label{many-6}
\begin{split}
\sum_{x_1,\dots,x_N \in 2r\Z^d\bs \{0\}} G^q(x_1,\dots,x_N)& \le 
\sum_{x_1,\dots,x_N \in 2r\Z^d\bs \{0\}} (N!)^{q-1} \sum_{\sigma\in \mathfrak{S}_N}
G^q(x_{\sigma_1})\prod_{i=1}^{N-1} G^q(x_{\sigma_{i+1}}- x_{\sigma_i})\\
& \le  (N!)^{q} \Big(\sum_{z\in 2r\Z^d\bs \{0\}} G^q(z)\Big)^N.
\end{split}
\end{equation}
Now fix some $2>q>\frac d{d-2}$,  
and note that by \eqref{Green}, one has (with a possibly larger constant $C$), 
$$\sum_{z\in 2r\Z^d\bs \{0\}} G^q(z)\le Cr^{q(2-d)},$$
so that \eqref{many-4} and \eqref{many-6} give, 
\begin{equation}\label{many-7}
\sum_{x_1,\dots,x_N \in 2r\Z^d\bs \{0\}}\bP^q\big(\cR_\infty \cap Q(x,r) \neq \emptyset,\ \forall x\in \C\big)\le C^{2N}\cdot (N!)^q.
\end{equation}
Then \reff{many-2} and \eqref{many-7} yield  
\begin{equation*}\label{many-8}
\begin{split}
\bP\big(\exists \mathcal C\in \mathcal X_{4r,N}  & :  \ell_\infty(Q(x,r)) > \rho r^d,\,  
 \tilde \cR_\infty \cap Q(x,r) \neq \emptyset, \, \forall x\in \mathcal C\big)\\
& \le \sum_{\C\in \mathcal X_{4r,N}}\bP\big(\ell_\infty(Q(x,r)) > \rho r^d, \ \forall x\in \C\big)\times 
\bP\big(\cR_\infty \cap Q(x,r) \neq \emptyset,\ \forall x\in \C\big)
\\
&\le \sum_{\C\in \mathcal X_{4r,N}}\bP^{2-q}\big(\ell_\infty(Q(x,r)) > \rho r^d, \ \forall x\in \C\big)\times 
\bP^q\big(\cR_\infty \cap Q(x,r) \neq \emptyset,\ \forall x\in \C\big)
\\
&\le C^{2N} (N!)^{q} \cdot 
\exp(-\kappa(2-q) \rho r^{d-2} N^{1-\frac 2d}), 
\end{split}
\end{equation*}
and we conclude the proof using the hypothesis \eqref{weak-hyp}. 
\end{proof}

One can now conclude the proofs of \eqref{finite.balls.localtime} and \eqref{scenario.finiteballs}. First we choose $\beta$ large enough, so that the probability of the 
event $\{ \tilde \ell_\infty(\cR[n,\infty))\ge 1\}$ is negligible, when we take $n=\exp(\beta t^{1-2/d})$, which is always possible by \eqref{finite.time} and the lower bound in \eqref{bornes.intersections}.

Next, by Lemma \ref{lem.tail} and \eqref{borne.simple.2}, it suffices to show that for any fixed $A>0$ and $\rho \in (0,1)$, 
the set $\cR_n(At^{1/d},\rho)\cap \tilde \cR_\infty$ can be covered by at most $N$ disjoint cubes of side length $At^{1/d}$, 
for some well-chosen constant $N\in \N$. To see this, we first fix the constant $N$ large enough, such that the bound obtained in \eqref{borne.cor.covering} with $r=At^{1/d}$, is negligible when 
compared to the lower bound in \eqref{bornes.intersections}.

Then we define inductively a sequence of boxes as follows. First if the set $\cR_n(At^{1/d},\rho)\cap \tilde \cR_\infty$ is nonempty, pick some point $x_1$ in it. 
Then, if the set $\cR_n(At^{1/d},\rho)\cap \tilde \cR_\infty \cap Q(x_1,4At^{1/d})^c$ is empty, stop the procedure. Otherwise pick some $x_2$ in it, and continue like this until we 
exhaust all points of $\cR_n(At^{1/d},\rho)\cap \tilde \cR_\infty$. Note that the points we define by this procedure $x_1,x_2,\dots$ are all at distance at least $4At^{1/d}$ one from each other by definition. 
Furthermore, for each $i$, one has by definition $|Q(x_i,At^{1/d})\cap \cR_n| \ge \rho A^dt$. Thus by Proposition \ref{cor.theo.covering}, the probability 
that we end up with more than $N$ cubes is negligible. Finally this means that with (conditional) probability going to $1$, as $t\to \infty$, 
we can cover $\cR_n(At^{1/d},\rho)\cap \tilde \cR_\infty$ by at most $N$ cubes of side length $4At^{1/d}$, which concludes the proofs of \eqref{finite.balls.localtime} and \eqref{scenario.finiteballs} (since each such 
cube is in turn the union of only a fixed number of cubes of side length $t^{1/d}$). Moreover, if $A$ and $1/\rho$ grow at most polynomially in $1/\epsilon$, then $N$ also by construction. 
 

\section{Proof of Theorems \ref{theo.main} and \ref{theo.scenario}} \label{sec.proofstheo} 
Let us define $\mathcal I_\infty:= \lim_{b\to \infty} \mathcal I(b)$, with $\mathcal I(b)$ as in \eqref{BBH.Phet}. 
Since it is easier to realize a large intersection in infinite time, rather than in any finite time, we already know that 
\begin{equation}\label{lower.bound.optimale}
\liminf_{t\to \infty} \frac 1{t^{1-\frac 2d}} \log \bP(|\cR_\infty\cap \tilde \cR_\infty|\ge t) \ge -\mathcal I_\infty.
\end{equation}
The proofs of Theorems \ref{theo.main} and \ref{theo.scenario} are now based on the following result.  
\begin{proposition}\label{prop.concav}
For $k$, $L$, and $t$ some positive integers, and $\delta\in (0,1)$ some real, define 
\begin{equation*}
\A(k,L,\delta, t):=\left\{\exists x_1,\dots,x_k\in \Z^d\,:\, 
\begin{array}{l}
\|x_i-x_j\|\ge L^2t^{1/d} \quad \forall i\neq j \\
|\cR_\infty \cap \tilde \cR_\infty \cap Q(x_i,Lt^{1/d})|\ge \delta t \quad \forall 1\le i\le k \\
|\cR_\infty \cap \tilde \cR_\infty \cap (\bigcup_{i=1}^k Q(x_i,Lt^{1/d}))| \ge t 
\end{array}
\right\}. 
\end{equation*}
There exist $C>0$ and $L_0\ge 1$, such that for any $L\ge L_0$, $k\le L$, and $\delta\in (0,1)$, 
\begin{equation*}
\limsup_{t\to \infty} \frac 1{t^{1-\frac 2d}}\log \bP(\A(k,L,\delta,t) ) \le -\mathcal I_\infty\left(1+(1-\frac 1{2^{2/d}})[(k-1)\delta]^{1-2/d}\right) + \frac{C\log k}{\log L}. 
\end{equation*}
\end{proposition}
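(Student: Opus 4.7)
The plan is to combine a discretization of the $k$-tuple of box centers, a surgery that decorrelates the walks across different boxes, the finite-time LDP \eqref{BBH.Phet} applied box-by-box, and a sharp form of strict subadditivity of $f(s):=s^{1-2/d}$ to extract the factor $(1-2^{-2/d})((k-1)\delta)^{1-2/d}$. As a preliminary reduction, I would use Proposition~\ref{prop.finiteballs} (with $\epsilon$ small depending on $\delta$) together with \eqref{finite.time} to restrict, up to an event of log-probability $o(t^{1-2/d})$, to the case where the two ranges are $\cR_n,\widetilde\cR_n$ with $n=\exp(\beta t^{1-2/d})$ (for $\beta$ large) and the centers $x_i$ lie on a fixed lattice of spacing $\asymp L^2 t^{1/d}$ inside a region of diameter $C(k,\delta)t^{1/d}$; this leaves $O_{k,\delta,L}(1)$ configurations, a prefactor that is negligible on the rate-function scale.

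Fix such a configuration $(x_1,\dots,x_k)$ and set $\widetilde Q_i:=Q(x_i,(L+1)t^{1/d})$. I then carry out the surgery outlined in Section~1.2, in the spirit of \cite{AS17,AS19a,AS19b}: cut the trajectory of $S$ at the entry/exit times of the $\widetilde Q_i$'s into \emph{inside} and \emph{between} excursions, replace each between excursion by an independent random walk path launched from a point sampled from the hitting distribution on $\partial \widetilde Q_i$, and do the same for $\widetilde S$. The multiplicative cost is $O_k(1)$ times Green-function ratios that are absorbed in the negligible prefactor, and after surgery the pairs $(S^{(i)},\widetilde S^{(i)})_{1\le i\le k}$ of within-box trajectories become mutually independent across $i$. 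Writing $t_i$ for the intersection volume in $Q(x_i,Lt^{1/d})$ and summing over discretized values of $(t_i)$ with $t_i\ge \delta t$ and $\sum_i t_i\ge t$, independence combined with \eqref{BBH.Phet} applied inside each box yields
\begin{equation*}
\bP(\mathcal A(k,L,\delta,t))\le e^{o(t^{1-2/d})}\sum_{(t_i)}\exp\Bigl(-\mathcal I_\infty\,(1-o_L(1))\sum_{i=1}^k t_i^{1-2/d}\Bigr),
\end{equation*}
where $o_L(1)$ bundles the gap $\mathcal I(b)-\mathcal I_\infty$ for a suitable finite $b>b_*$ with the entropy cost of localizing excursions to the correct box; careful tracking of the $k$- and $L$-dependence produces the announced correction $C\log k/\log L$.

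It then remains to lower bound $\sum_i t_i^{1-2/d}$ by $t^{1-2/d}+(1-2^{-2/d})((k-1)\delta t)^{1-2/d}$ on the above constraint set. The key ingredient is the sharp subadditivity
\begin{equation*}
a^{1-2/d}+b^{1-2/d}-(a+b)^{1-2/d}\ge (1-2^{-2/d})\,\min(a,b)^{1-2/d},
\end{equation*}
verified by checking that $(1+\tau)^{1-2/d}-\tau^{1-2/d}$ is decreasing on $\tau\ge 1$ and using $2^{1-2/d}-1\le 2^{-2/d}$ at $\tau=1$. Ordering $t_1\ge\dots\ge t_k$ and setting $S:=\sum_{i\ge 2}t_i\ge (k-1)\delta t$, iterated subadditivity gives $\sum_{i\ge 2}t_i^{1-2/d}\ge S^{1-2/d}$, and the sharp subadditivity applied to the pair $(t_1,S)$---with a short swap if $t_1<S$, using $t_1\ge \delta t$ when $t_1$ is small---yields the desired bound. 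Plugging into the display above and taking limsup completes the proof.

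I expect the main obstacle to be the second step: executing the surgery so that the endpoints' entropy only costs an $O_k(1)$ factor---thereby preserving the sharp constant $\mathcal I_\infty$ in the per-box LDP---and producing the clean error $C\log k/\log L$ rather than a weaker $O(k/\log L)$. The concavity inequality is elementary but slightly finicky, and the discretization is a routine application of the finite-ball scenario of Proposition~\ref{prop.finiteballs}.
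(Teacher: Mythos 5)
Your plan mirrors the paper's argument at a high level—reduce to finite time, perform a surgery decorrelating the two walks across distant boxes, apply the finite-time LDP \eqref{BBH.Phet} box-by-box, and exploit sharp concavity of $s\mapsto s^{1-2/d}$—and you correctly locate the source of the $\log k/\log L$ error in the entropy of the excursion bookkeeping. Two of your steps, however, do not go through as written.

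First, the claimed spatial localization of the centers $x_1,\dots,x_k$ to a region of diameter $O_{k,\delta}(t^{1/d})$ is not justified by the tools you invoke. Equation \eqref{finite.time} only caps the time at $n=\exp(\beta t^{1-2/d})$, which confines the walks to a ball of \emph{stretched-exponential} radius, not $O(t^{1/d})$; and Proposition~\ref{prop.finiteballs} guarantees that $N$ covering boxes exist but says nothing about where they sit. Indeed, tightness of the localizing box center is announced in the introduction as a \emph{consequence} of Theorem~\ref{theo.scenario}, so it cannot be assumed when proving Proposition~\ref{prop.concav}. The paper avoids localization entirely: after surgery it sums the resulting bound over all $x_1,\dots,x_k\in r\Z^d$ with $r\asymp L^2 t^{1/d}$, and this infinite sum converges thanks to the Green-function products $G_\sigma(x_1,\dots,x_k)$ coming from the cost of both walks visiting all $k$ far-apart cubes (see \eqref{tau.bound.1}--\eqref{tau.bound.2} and the summation leading to \eqref{main.bound.3}). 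You would need either to replicate this summation or to supply an independent a priori localization argument; as it stands, the reduction to ``$O_{k,\delta,L}(1)$ configurations'' is a gap.

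Second, the concavity step loses the factor $(k-1)^{1-2/d}$. Merging $t_2,\dots,t_k$ by bare subadditivity into $S:=\sum_{i\ge 2}t_i$ and then applying the sharp two-term inequality to the pair $(t_1,S)$ yields the correction $(1-2^{-2/d})\min(t_1,S)^{1-2/d}$. This is fine when $t_1\ge S$, since then $\min=S\ge (k-1)\delta t$; but if $t_1<S$—already possible with $t_1=\dots=t_k$ and $k\ge 3$—the minimum is $t_1$, which you can only bound below by $\delta t$, falling short of $(k-1)\delta t$. Your ``short swap'' does not repair this. The paper's Lemma~\ref{lem.concav} instead carries a correction term $\alpha\big((j-1)\min_{i\le j}t_i\big)^q$ alive through each inductive merge and then uses superadditivity $a^q+b^q\ge(a+b)^q$ at the final step to assemble the accumulated corrections into the single term $\alpha\big((k-1)\min_i t_i\big)^q$; this bookkeeping is exactly what your one-shot pairing omits. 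The weaker bound you do obtain, with $\delta^{1-2/d}$ in place of $[(k-1)\delta]^{1-2/d}$, would in fact still suffice for the application to Theorem~\ref{theo.scenario}, but it is strictly weaker than the proposition you were asked to prove.
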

Note that Theorem \ref{theo.main} follows from 
Theorem \ref{theo.scenario} and Proposition \ref{prop.concav}, applied with $k=1$. Now before we prove Proposition \ref{prop.concav}, let us see how it allows to prove Theorem \ref{theo.scenario} as well.

\begin{proof}[Proof of Theorem \ref{theo.scenario}]
For $N\ge 1$ some integer and $t>0$, define the event 
\begin{equation*}
\mathcal B_{N,t} := \left\{\exists x_1,\dots,x_N\in \Z^d\, :\, |\cR_\infty \cap \tilde \cR_\infty \cap \left(\bigcup_{i=1}^N Q(x_i,t^{1/d})\right)| \ge t\right\},
\end{equation*}
and for $L \ge 1$ another integer, set 
\begin{equation}
\mathcal B_{N,L,t} := \left\{\exists x_1,\dots,x_N\in \Z^d\, :\, 
\begin{array}{l}
\|x_i-x_j\|\ge L^2t^{1/d}\quad \forall i\neq j \\
|\cR_\infty \cap \tilde \cR_\infty \cap \left(\bigcup_{i=1}^N Q(x_i,Lt^{1/d})\right)| \ge t
\end{array}
\right\}.
\end{equation}
We claim that for any $N\ge 1$, $L_0\ge 1$, and $t>0$, one has 
\begin{equation}\label{inclusion.B}
\mathcal B_{N,t} \subseteq  \{B_{N,L,t}\, :\, L=L_0,\dots,(2L_0)^{2^N}\}.  
\end{equation}  
Indeed, assume $\B_{N,t}$ holds, and consider $x_1,\dots,x_N$ realizing this event. Let also $I_0:=\{1,\dots,N\}$. If the $(x_i)_{i\in I_0}$ are all at distance at least $L_0^2t^{1/d}$ one from each other, we stop and $\B_{N,L_0,t}$ 
holds. If not, consider the first index $i$, such that $x_i$ is at distance smaller than $L_0^2t^{1/d}$ from one of the $x_j$, with $j<i$, and set $I_1=I_0\bs\{i\}$. Set also $L_1 =(2L_0)^2$, and restart the algorithm with $I_1$ and $L_1$ in place of $I_0$ and $L_0$ respectively. Since this procedure stops in at most $N$ steps, we deduce well \eqref{inclusion.B} (note that we may end up with less than $N$ points,
but since we do not impose the intersection of the ranges with all cubes being nonempty, we may always add arbitrary some distant points at the end).
Next, let $K>0$ be some fixed constant. We claim that for any reals $\epsilon \in (0,1)$, $t>0$, and any integers $N\le \epsilon^{-K}$, $L\ge 1$, one has 
\begin{equation}\label{inclusion.B.A}
\mathcal B_{N,L,t} \subseteq \bigcup_{k=1}^N \mathcal A\left(k,L,\frac{\epsilon^{\frac d{d-1}}}{2^{(d-1)K}k},(1-\epsilon)t\right). 
\end{equation}
To see this, assume that the event $\B_{N,L,t}$ holds, and consider $x_1,\dots,x_N$ realizing it. Set $k_0=N$, and $J_0= \{1,\dots,N\}$, and then let 
$$J_1:=\{i \in J_0 \, :\, |\cR_\infty \cap \tilde \cR_\infty \cap Q(x_i,Lt^{1/d})|\ge \frac{\epsilon}{2k_0}\}. $$
Note that by definition of $\B_{N,L,t}$ and $J_1$, 
$$|\cR_\infty \cap \tilde \cR_\infty \cap (\bigcup_{i\in J_1} Q(x_i,Lt^{1/d}))|\ge (1-\frac{\epsilon}{2})t.$$
Thus if $|J_1|\ge \epsilon^{\frac 1{d-1}}k_0$, we are done, since in this case 
$\mathcal B_{N,L,t}  \subset \mathcal A(k_1,L,\frac{\epsilon^{\frac d{d-1}}}{2k_1},(1-\frac{\epsilon}{2})t)$, with $k_1:=|J_1|$.   If not, define 
$$J_2:= \{i\in J_1\, :\, |\cR_\infty \cap \tilde \cR_\infty \cap Q(x_i,Lt^{1/d})|\ge \frac{\epsilon}{4k_1}\}. $$
One has by definition, 
$$|\cR_\infty \cap \tilde \cR_\infty \cap (\bigcup_{i\in J_2} Q(x_i,Lt^{1/d}))|\ge (1-\frac{\epsilon}{2}-\frac{\epsilon}4)t.$$
Thus if $|J_2|\ge \epsilon^{\frac 1{d-1}}k_1$, we are done as well, and if not we continue defining inductively $(J_i)_{i\ge 1}$ and $(k_i)_{i\ge 1}$ as above, until either $|J_i|\ge \epsilon^{\frac 1{d-1}}k_{i-1}$,  or $|J_i|=1$, for some $i$. Note that in the latter  
case one has $\mathcal B_{N,L,t} \subseteq  \mathcal A(1,L,1-\epsilon,(1-\epsilon)t)$.  
Since on the other hand at each step we reduce 
the cardinality of the set of points by a factor at least $\epsilon^{1/(d-1)}$, and by hypothesis $N\le \epsilon^{-K}$, this algorithm must stop in at most $(d-1) K$ steps, 
and this proves well \eqref{inclusion.B.A}.

Recall next that Proposition \ref{prop.finiteballs} says that for any $\epsilon$, there exists some integer $N=N(\epsilon)$, such that 
$$\lim_{t\to \infty} \bP(\B_{N,(1-\epsilon)t} \mid |\cR_\infty \cap \tilde \cR_\infty|\ge t) =1, $$
and furthermore, that one can find a constant $K$, such that $N(\epsilon)\le \epsilon^{-K}$, at least for $\epsilon$ small enough.  
Moreover, the constant $K$ being fixed, Proposition \ref{prop.concav} and the lower bound \eqref{lower.bound.optimale} also show that for any 
$\epsilon$ small enough, any $L\ge \exp(1/\epsilon)$, and $2\le k\le L$, 
$$\lim_{t\to \infty} \bP(\mathcal A(k,L,\frac{\epsilon^{\frac{d}{d-1}}}{2^{(d-1)K}k},(1-\epsilon)^2t) \mid |\cR_\infty \cap \tilde \cR_\infty|\ge t) =0.$$
Thus Theorem \ref{theo.scenario} follows from \eqref{inclusion.B} and \eqref{inclusion.B.A}, taking $L_0\ge \exp(1/\epsilon)$, and noting that for any $L\le L'$, and $\delta \le 1$, one has the inclusion $\mathcal A(1,L,\delta,t)\subseteq \B_{1,L',t}$.  
\end{proof}

It remains now to prove Proposition \ref{prop.concav}. For this we will need the following lemma. 

\begin{lemma}\label{lem.concav}
Assume $q\in (0,1]$. For any integer $k\ge 1$, 
and $t_1,\dots,t_k$ positive numbers, we have
\begin{equation}\label{ineq-conca}
t_1^q+\dots+t_k^q\ge \big( \sum_{i=1}^k t_i\big)^q+
(1-\frac{1}{2^{1-q}})\Big((k-1)\min_{i\le k} (t_i)\Big)^q. 
\end{equation}
\end{lemma}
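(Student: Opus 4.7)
The plan is to reduce inequality \eqref{ineq-conca} to a two-variable version and then telescope. The key sub-inequality to establish first is: for all $a, b > 0$ and $q \in (0,1]$,
\[
a^q + b^q - (a+b)^q \;\geq\; 2(1 - 2^{q-1})\,\min(a,b)^q.
\]
By symmetry one may assume $a \leq b$, and by homogeneity one may further normalize $a = 1$. Setting $r = b \geq 1$, the claim reduces to showing $h(r) := 1 + r^q - (1+r)^q \geq 2(1 - 2^{q-1})$ on $[1,\infty)$. Since $h'(r) = q\bigl(r^{q-1} - (1+r)^{q-1}\bigr) \geq 0$ (using $q - 1 \leq 0$ and $r \leq 1+r$), the function $h$ is non-decreasing, and $h(1) = 2 - 2^q = 2(1 - 2^{q-1})$.

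Next I would assume without loss of generality that $t_1 \leq t_2 \leq \cdots \leq t_k$, so that $\min_i t_i = t_1$, and introduce the partial sums $S_j := \sum_{i=1}^j t_i$. The telescoping identity
\[
\sum_{i=1}^k t_i^q - \Bigl(\sum_{i=1}^k t_i\Bigr)^q \;=\; \sum_{j=2}^k \bigl(t_j^q + S_{j-1}^q - S_j^q\bigr)
\]
combined with the sub-inequality applied to $(a,b) = (t_j, S_{j-1})$ in each summand, together with the observation that both $t_j \geq t_1$ and $S_{j-1} \geq t_1$ (the latter because $j \geq 2$), yields
\[
\sum_{i=1}^k t_i^q - \Bigl(\sum_{i=1}^k t_i\Bigr)^q \;\geq\; 2(k-1)(1 - 2^{q-1})\, t_1^q.
\]

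It then remains to check that $2(k-1) \geq (k-1)^q$ for every integer $k \geq 1$; this is trivial for $k = 1$, and for $k \geq 2$ follows from $(k-1)^{1-q} \geq 1$, hence $2(k-1)^{1-q} \geq 2 \geq 1$. Multiplying through by $t_1^q$ gives the desired bound $(1 - 2^{q-1})\bigl((k-1) t_1\bigr)^q$ on the right-hand side of \eqref{ineq-conca}. The only genuine piece of analysis is the monotonicity of $h$ in the sub-lemma; the rest is a straightforward telescoping whose crude factor $2(k-1)$ is amply enough to dominate the sharper constant $(k-1)^q$, so I do not expect any significant obstacle.
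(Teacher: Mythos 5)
Your proof is correct, and it follows a genuinely different route than the paper's. The paper argues by induction on $k$: its base case is the inequality $t_1^q + 2^{q-1}t_2^q \ge (t_1+t_2)^q$ (equivalently $a^q+b^q-(a+b)^q \ge (1-2^{q-1})\min(a,b)^q$), and the induction step then reassembles the accumulated error terms using subadditivity $a^q+b^q\ge(a+b)^q$ so that the quantity $\bigl((k-1)\min_i t_i\bigr)^q$ emerges directly in the correct form. You instead prove the sharper two-variable bound $a^q+b^q-(a+b)^q\ge 2(1-2^{q-1})\min(a,b)^q$ (tight at $a=b$, twice the paper's constant), telescope along the sorted partial sums to get a lower bound of $2(k-1)(1-2^{q-1})t_1^q$, and then close with the crude comparison $2(k-1)\ge(k-1)^q$. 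The extra factor of $2$ in your two-variable lemma is exactly what compensates for replacing the subtler quantity $\bigl((k-1)t_1\bigr)^q$ by $(k-1)t_1^q$ in the telescoping; the paper's induction avoids this loss by keeping the $q$-th power of the sum intact at every stage. Your argument is arguably a bit more elementary (no induction, and the final arithmetic is trivial), at the cost of relying on a slightly stronger two-variable inequality whose proof is of the same difficulty as the paper's base case. Both are valid.
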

\begin{proof}
The proof is by induction. For $k=2$, assume $t_1\ge t_2>0$. Then 
\reff{ineq-conca} reduces to seeing that
\[
t_1^q+\frac{1}{2^{1-q}} t_2^q\ge (t_1+t_2)^q.
\]
If we set $x=t_2/t_1$, we need to show that for $0\le x\le 1$, 
\[
1+\frac{x^q}{2^{1-q}}\ge (1+x)^q.
\]
By taking derivatives of the two terms, the problem reduces to
checking that $2x<1+x$ for $0\le x\le 1$, which is indeed true.
The induction follows: set $\alpha=1-1/2^{1-q}$, and write
\[
\begin{split}
t_k^q+\sum_{i=1}^{k-1} t_i^q& \ge  t_k^q+ \big( \sum_{i=1}^{k-1} t_i\big)^q
+\alpha\Big((k-2)\min_{i\le k-1} (t_i)\Big)^q\\
& \ge \big(\sum_{i=1}^{k} t_i\big)^q
+\alpha\Big\{\min(t_k,\sum_{i=1}^{k-1} t_i)^q+\Big((k-2)\min_{i\le k-1} (t_i)
\Big)^q\Big\}\\
& \ge  \big(\sum_{i=1}^{k} t_i\big)^q+ \alpha 
\Big\{\min_{i\le k}(t_i)^q+\Big((k-2)\min_{i\le k-1} (t_i)
\Big)^q\Big\}\\
& \ge  \big(\sum_{i=1}^k t_i\big)^q+
\alpha\Big((k-1)\min_{i\le k} (t_i) \Big)^q,
\end{split}
\]
using the inequality $a^q + b^q\ge (a+b)^q$ at the last line. 
\end{proof}

\begin{proof}[Proof of Proposition \ref{prop.concav}]
The idea is to cut the two trajectories $(S_n)_{n\ge 0}$ and $(\tilde S_n)_{n\ge 0}$ realizing the event $\mathcal A(k,L,\delta,t)$ into 
excursions in a natural way, and then realizing some surgery, to 
compare the probability of the event to the product of the probabilities of realizing a certain intersection inside $k$ different cubes.  
Now let us proceed with the details. Fix $x_1,\dots,x_k\in \Z^d$, with $\|x_i-x_j\|\ge L^2t^{1/d}$, for all $i\neq j$. 
For $1\le i \le k$, set $Q_i:= Q(x_i,Lt^{1/d})$, and $\overline Q_i:= Q(x_i,L^2t^{1/d})$. 
Assume to simplify notation that all the $x_i$ belong to $\lfloor L^2t^{1/d}\rfloor\Z^d$ (if not one can always replace them by 
the closest points on this lattice, and increase the side-length of the cubes $Q_i$, and reduce the one of the $\overline Q_i$, 
both by an innocuous factor $2$). Finally to simplify also the discussion below, we further assume that the origin does not belong to any of 
the cubes $\overline Q_i$ (minor modifications of the argument would be required otherwise, which we safely leave to the reader). 
Then define two sequences of stopping times $(s_\ell)_{\ell \ge 0}$ and $(\tau_\ell)_{\ell \ge 0}$ as follows. First $s_0=\tau_0=0$, and for $\ell\ge 1$,  
$$\tau_\ell :=\inf\{n\ge s_{\ell-1}  : S_n \in \bigcup_{i=1}^k \partial Q_i\},\quad \text{and}\quad s_\ell := \inf \{n\ge \tau_\ell  : S_n \in \bigcup_{i=1}^k \partial {\overline Q}_i\}. $$
Let $\mathcal N:= \sum_{\ell =1}^\infty \1\{\tau_\ell <\infty\}$, be  
the total number of excursions. Let $\tau(\Lambda):=\inf\{n : S_n\in \Lambda\}$, for the hitting time of a subset $\Lambda\subseteq \Z^d$. It follows from \eqref{Green} and \eqref{bound.hit}, that for any $\ell\ge 1$, 
\begin{align*}
\bP(\tau_{\ell+1}<\infty\mid \tau_\ell<\infty)  \le \sup_{1\le i\le k} 
\sup_{y\in \partial {\overline Q}_i } \bP_y(\tau(\cup_{i=1}^k Q_i)<\infty) \le  \sup_{1\le i\le k} 
\sup_{y\in \partial {\overline Q}_i }  \sum_{j=1}^k \bP_y(\tau(Q_j)<\infty) \le \frac{C}{L^{d-3}},
\end{align*}
for some constant $C>0$, using also the hypothesis $k\le L$, for the last inequality. Consequently, one has for some constant $C_0>0$, and all $t$ large enough, 
\begin{equation}\label{nbr.exc}
\bP\left(\mathcal N\ge \frac{C_0 t^{1-\frac 2d}}{\log L} \right) \le \exp(-2\mathcal I_\infty\cdot t^{1-\frac 2d}). 
\end{equation}
Now, let $i(\ell)$ be the index of the cube to which $S(\tau_\ell)$ belongs, when $\tau_\ell$ is finite: that is $S(\tau_\ell) \in Q_{i(\ell)}$.  
Define further $\ell_1,\dots,\ell_k$ inductively by $\ell_1=1$, and for $j\ge 1$, 
$$\ell_{j+1} = \inf \left\{\ell >\ell_j : i(\ell) \notin \{i(\ell_1),\dots,i(\ell_j)\}\right\}. $$
This induces a permutation $\sigma \in \mathfrak S_k$, defined by $\sigma(j) := i(\ell_j)$, which represents the order of first visits of the cubes by the walk. 
Recall now the definition of the harmonic measure $\mu_i$ of $Q_i$: 
$$\mu_i(z) := \bP_z[\cR[1,\infty) \cap Q_i=\emptyset ],\quad \forall z\in \partial Q_i.$$ 
We will need the following estimate (see Proposition 6.5.4 in \cite{LL}): for $y\notin Q_i$, and $z\in \partial Q_i$, 
\begin{equation} \label{hitting.harmonic}
\bP_y[S_{\tau(Q_i)} = z \mid \tau(Q_i)<\infty] = \mu_i(z)\left[1+\mathcal O\left(\frac{Lt^{1/d}}{\|y-x_i\|}\right)\right].
\end{equation}
Combining it with \eqref{Green} and \eqref{bound.hit}, this yields for some constant $c_1>0$,  for any $1\le i,j\le k$, and any $z\in \partial Q_j$,  
\begin{equation}\label{tau.bound.1}
\sup_{y\in \partial {\overline Q}_i} \bP_y(\tau(Q_j)<\infty, S_{\tau(Q_j)}=z) \le \frac{c_1}{L^{d-2}}\mu_j(z), 
\end{equation}
and when $i\neq j$, we also get 
\begin{equation}\label{tau.bound.2}
\sup_{y\in \partial {\overline Q}_i} \bP_y(\tau(Q_j)<\infty, S_{\tau(Q_j)}=z) \le \frac{c_1}{L^{d-2}}\mu_j(z)\cdot (L^2t^{1/d})^{d-2}G(x_j-x_i). 
\end{equation}
Define analogously $\tilde \tau_\ell$, $\tilde s_\ell,\tilde i(\ell),\dots$, for the walk $\tilde S$. 
Then for $1\le j\le k$, set 
$$\mathcal I_j := \Big|\Big(\bigcup_{\ell \, :\, i(\ell) = j} \cR[\tau_\ell, s_\ell] \Big) \cap \Big(\bigcup_{\ell\,  :\, \tilde i(\ell) = j} \tilde \cR[\tilde \tau_\ell,\tilde s_\ell] \Big)\Big|, $$
the number of intersections of the two walks inside the $Q_j$. Note that by construction, 
$$\mathcal I_j =|\cR_\infty \cap \tilde \cR_\infty \cap Q_j|, \quad \text{for all } 1\le j \le k.$$
Let now $t_1,\dots,t_k$, and $n$, $m$ be some fixed positive integers. Then consider two fixed 
sequences of indices $(i_1,\dots,i_n)$ and $(\tilde i_1,\dots,\tilde i_m)$, taking values in $\{1,\dots,k\}$, such that all $j\in \{1,\dots,k\}$ appear at least once in the two sequences. This induces two permutations $\sigma, \tilde \sigma\in \mathfrak S_k$, as defined above (one for each sequence). Then set 
$$G_\sigma(x_1,\dots,x_k):=(L^2t^{1/d})^{k(d-2)} \cdot G(x_{\sigma(1)})\prod_{j=1}^{k-1}G(x_{\sigma(j+1)} - x_{\sigma(j)}). $$
Let also for $1\le j\le k$, 
$$n_j := \sum_{\ell = 1}^n \1\{i_\ell = j\}, \quad \text{and} \quad m_j :=  \sum_{\ell = 1}^m \1\{\tilde i_\ell = j\}. $$
Then applying \eqref{tau.bound.1}, and \eqref{tau.bound.2} at indices $\ell_j$, for $1\le j\le k$, shows that 
\begin{align}\label{main.bound.1}
\nonumber & \bP\left(
\begin{array}{c} 
\mathcal N = n, \quad  \tilde {\mathcal N} = m, \quad \mathcal I_j \ge t_j, \ \forall j=1,\dots,k\\
 i(\ell) = i_\ell\  \forall \ell\le n, \quad \text{and}\quad \tilde i(\ell) = \tilde i_\ell \ \forall \ell \le m 
\end{array}
 \right) \\
 & \le (\frac {c_1}{L^{d-2}})^{n+m} \left(\prod_{j=1}^k \bP_{\mu_j,n_j,m_j}(\mathcal I_j\ge t_j) \right) G_\sigma(x_1,\dots,x_k)G_{\tilde \sigma}(x_1,\dots,x_k). 
 \end{align}
where for all $1\le j\le k$, $\bP_{\mu_j,n_j,m_j}$ denotes the law of the walk conditionally on 
$(S(\tau_\ell))_{\ell :i_\ell =j}$, and $(\tilde S(\tilde \tau_\ell))_{\ell :\tilde i_\ell = j}$, 
being independent and identically distributed with joint law $\mu_j$, or equivalently the law of $n_j+m_j$ independent 
excursions starting from law $\mu_j$.

Our next task is to bound the probabilities $\bP_{\mu_j,n_j,m_j}(\mathcal I_j\ge t_j)$, using \eqref{BBH.Phet}. 
Proposition 6.5.1 in \cite{LL} shows that for some constant $c>0$, for any $1\le j\le k$,  and $y\notin Q_j$, 
$$\bP_y(\tau(Q_j) <\infty) =c\frac{\capa(Q_j)}{\|y-x_j\|^{d-2}} \left[1+\mathcal O\left(\frac{Lt^{1/d}}{\|y-x_j\|}\right)\right], $$
where $\capa(Q_j)$ denotes the capacity of the box $Q_j$, for which all we need to know is that it is of order $L^{d-2}t^{1-2/d}$.  
When combined with \eqref{hitting.harmonic} this yields the existence of a constant $c_2>0$, such that for all $1\le j\le k$, and all $z\in \partial Q_j$, 
\begin{equation}\label{borne.inf.hitQj}
\inf_{y\in \partial {\overline Q}_j} \bP_y(\tau(Q_j)<\tau(Q(x_j,L^3t^{1/d})),\, S_{\tau(Q_j)}=z) \ge \frac{c_2}{L^{d-2}}\mu_j(z).
\end{equation}
Now let $x\in \Z^d$, be such that the origin belongs to $\partial Q(x,L^2t^{1/d})$. The above inequality \eqref{borne.inf.hitQj} 
shows that for any $1\le j\le k$, and any integers $n_j, m_j$, 
\begin{equation}\label{bound.box}
\bP(|\cR_{\tau(Q(x,L^3t^{1/d})} \cap \tilde \cR_{\tilde \tau(Q(x,L^3t^{1/d})} \cap Q(x,Lt^{1/d})|\ge t_j)\ge \left(\frac{c_2}{L^{d-2}}\right)^{n_j+m_j} \bP_{\mu_j,n_j,m_j}(\mathcal I_j\ge t_j).
\end{equation}
On the other hand, Lemmas \ref{lem.tail} and \ref{lem.sumGreen} show that for some constant $b>0$,  
$$\bP(\tau(Q(x,L^3t^{1/d}))>bt) \le \exp(-2\mathcal I_\infty t^{1-\frac 2d}),$$
at least for $t$ large enough. Thus if $t_j \ge \delta t$, we get with \eqref{BBH.Phet}, that at least for $t$ large enough, the 
left-hand side of \eqref{bound.box} is bounded above by $2\bP(|\cR_{bt}\cap \tilde \cR_{bt}|\ge t_j)$. 
When combined with \eqref{main.bound.1}, this shows that for some constant $b>0$, for all $t_j\ge \delta t$, 
\begin{align}\label{main.bound.2}
\nonumber & \bP\left(
\begin{array}{c} 
\mathcal N = n, \quad  \tilde {\mathcal N} = m, \quad \mathcal I_j \ge t_j, \ \forall j=1,\dots,k\\
 i(\ell) = i_\ell\  \forall \ell\le n, \quad \text{and}\quad \tilde i(\ell) = \tilde i_\ell \ \forall \ell \le m 
\end{array}
 \right) \\
 & \le 2^k(\frac{c_1}{c_2})^{n+m} \left(\prod_{j=1}^k \bP(|\cR_{bt} \cap \tilde \cR_{bt}|\ge t_j)  \right) G_\sigma(x_1,\dots,x_k)G_{\tilde \sigma}(x_1,\dots,x_k)\\
 \nonumber & \le 2^k (\frac{c_1}{c_2})^{n+m} \left(\prod_{j=1}^k \bP(|\cR_{b't_j} \cap \tilde \cR_{b't_j}|\ge t_j)  \right) \max_{\sigma\in \mathfrak S_k} G_\sigma(x_1,\dots,x_k)^2,  
 \end{align}
 with $b'=b/\delta$. 
Summing over all possible sequences $(i_\ell)_{\ell \le n}$ and $(\tilde i_\ell)_{\ell \le m}$, we get 
\begin{align*}
&\bP\left(\mathcal N = n, \, \tilde {\mathcal N} = m, \, \mathcal I_j \ge t_j,  \, \forall j=1,\dots,k  \right) \\
&\qquad  \le 2^k  (\frac{kc_1}{c_2})^{n+m} \left(\prod_{j=1}^k \bP(|\cR_{b't_j} \cap \tilde \cR_{b't_j}|\ge t_j)  \right) \max_{\sigma\in \mathfrak S_k} G_\sigma(x_1,\dots,x_k)^2.  
 \end{align*}
Summing then over all $n,m\le N_0:=\lfloor \frac{C_0t^{1-\frac 2d}}{\log L}\rfloor$, with $C_0$ as in \eqref{nbr.exc}, we get 
\begin{align}\label{main.bound.3}
\nonumber & \bP\left(\mathcal N \le N_0, \, \tilde {\mathcal N} \le N_0, \, \mathcal I_j \ge t_j,  \, \forall j=1,\dots,k  \right) \\
& \qquad \le 2^k N_0^2 (\frac{kc_1}{c_2})^{2N_0} \left(\prod_{j=1}^k \bP(|\cR_{b't_j} \cap \tilde \cR_{b't_j}|\ge t_j)  \right) \max_{\sigma\in \mathfrak S_k} G_\sigma(x_1,\dots,x_k)^2.   
 \end{align}
Now letting $r:= \lfloor L^2t^{1/d}\rfloor$, we get using \eqref{Green}, 
\begin{equation*}
 \sum_{x_1,\dots,x_k\in r\Z^d} \max_{\sigma \in \mathfrak S_k} G_\sigma(x_1,\dots,x_k)^2 
 \le \sum_{\sigma \in \mathfrak S_k} \sum_{x_1,\dots,x_k\in r\Z^d} G_\sigma(x_1,\dots,x_k)^2 \le C^k k!. 
\end{equation*}
Thus summing over all $x_1,\dots,x_k\in r\Z^d$ in \eqref{main.bound.3}, and using \eqref{nbr.exc}, we get 
\begin{align*}
&\sum_{x_1,\dots,x_k \in r\Z^d} \bP( \mathcal I_j \ge t_j,   \forall j=1,\dots,k  ) \\
 & \qquad \le (2C)^k(k!) N_0^2 (\frac{kc_1}{c_2})^{2N_0}\left(\prod_{j=1}^k \bP(|\cR_{b't_j} \cap \tilde \cR_{b't_j}|\ge t_j)  \right)  + \exp(-2\mathcal I_\infty t^{1-\frac 2d}). 
 \end{align*}
Finally by using \eqref{BBH.Phet} and Lemma \ref{lem.concav} (with $q=1-\frac 2d$), and then summing over all possible $t_1,\dots,t_k\ge \delta t$, satisfying $t_1+\dots + t_k = t$, we conclude the proof of the proposition. 
 \end{proof}

\section*{Acknowledgements} This research 
was supported by public grants overseen by the 
french National Research Agency, ANR SWiWS (ANR-17-CE40-0032-02) and ANR MALIN (ANR-16-CE93-0003).

\end{document}